\PassOptionsToPackage{nameinlink,capitalize}{cleveref}

\documentclass{article} %
\usepackage[dvipsnames,table,xcdraw]{xcolor} %
\usepackage{iclr2022_conference,times}

\usepackage{hyperref}
\usepackage{url}

\makeatletter
\DeclareRobustCommand{\iscircle}{\mathord{\mathpalette\is@circle\relax}}
\newcommand\is@circle[2]{%
  \begingroup
  \sbox\z@{\raisebox{\depth}{$\m@th#1\bigcirc$}}%
  \sbox\tw@{$#1\square$}%
  \resizebox{!}{\ht\tw@}{\usebox{\z@}}%
  \endgroup
}
\makeatother

\newcommand*{\vcenterimage}[2]{\vcenter{\hbox{\includegraphics[#1]{#2}}}}
\newcommand*{\vcenterarrow}{\vcenter{\hbox{$\Longrightarrow$}}}

\newcommand{\rebuttal}[1]{#1}

\usepackage[inline]{enumitem}
\usepackage[%
	thmreset=section,
	eqreset=section,
]{myPreamble}

\renewcommand\bm[1]{%
	\mathchoice%
		{\text{\boldmath{\(\displaystyle #1\)}}}%
		{\text{\boldmath{\(\textstyle #1\)}}}%
		{\text{\boldmath{\(\scriptstyle #1\)}}}%
		{\text{\boldmath{\(\scriptscriptstyle #1\)}}}%
}

	\newcommand{\D}{\mathcal D}

	\newcommand{\z}{\bar z}

	\newcommand{\G}{G}

\makeatletter

	\newcommand{\BregmanKernel}{H}						%
	\newcommand{\h}{\@ifstar{\@@h}{\@h}}				%
	\let\P\relax
	\newcommand{\E}{\@ifstar\@@E\@E}					%
	\newcommand{\P}{\@ifstar\@@P\@P}					%

	\newcommand{\@h}{\@ifnextchar_{{\color{cyan}h}}{{\color{blue}\BregmanKernel}}}
	\newcommand{\@@h}{\@ifnextchar_{{\color{orange}\hat h}}{{\color{red}\hat\BregmanKernel}}}

	\newcommand{\@P}[2][k]{\mathcal P_{#1}\ifstrempty{#2}{}{\left[#2\right]}}
	\newcommand{\@@P}[2][k]{\mathcal P_{#1}\ifstrempty{#2}{}{[#2]}}

	\newcommandx{\@E}[3][1=k,3={}]{
		\mathbb E_{#1}\ifstrempty{#2}{}{
			\left[
				#2\ifstrempty{#3}{}{\vphantom{#3}\right|\left.#3}
			\right]
		}
	}
	\newcommandx{\@@E}[3][1=k,3={}]{
		\mathbb E_{#1}\ifstrempty{#2}{}{
			[#2\ifstrempty{#3}{}{|#3}]
		}
	}

\makeatother

\makeatletter

	\renewcommand{\alglinenumber}[1]{%
		\footnotesize
		\textbf{\thealgorithm}.%
		\fillwidthof[l]{\oldstylenums{88}:}{\oldstylenums{\arabic{ALG@line}}:}%
	}
	\renewcommand{\theALG@line}{\thealgorithm.\oldstylenums{\arabic{ALG@line}}}
	\providecommand{\theHALG@line}{\thealgorithm.\arabic{ALG@line}}

\makeatother

\newtheorem{definition}{Definition}
\newtheorem{proposition}{Proposition}
\newcounter{example}%
\newenvironment{example}[1][]{\refstepcounter{example}\par\medskip
   \noindent {{\bf Example~\theexample}: #1} %
   }{\medskip}

\usepackage{mathabx}
\usepackage{wasysym}

\title{Escaping limit cycles: Global convergence for constrained nonconvex-nonconcave minimax problems}

\author{
    Thomas Pethick\thanks{Laboratory for Information and Inference Systems (LIONS), EPFL (\href{mailto:thomas.pethick@epfl.ch}{thomas.pethick@epfl.ch})} \And 
    Puya Latafat\thanks{Department of Electrical Engineering (ESAT-STADIUS), KU Leuven} \And 
    Panagiotis Patrinos\footnotemark[2] \And 
    Olivier Fercoq\thanks{Laboratoire Traitement et Communication d'Information, Télécom Paris, Institut Polytechnique de Paris} \And
    Volkan Cevher\footnotemark[1]
}

\iclrfinalcopy %
\begin{document}

\maketitle

\begin{abstract}

This paper introduces a new extragradient-type algorithm for a class of nonconvex-nonconcave minimax problems. It is well-known that finding a local solution for general minimax problems is computationally intractable. This observation has recently motivated the study of structures sufficient for convergence of first order methods in the more general setting of variational inequalities when the so-called \emph{weak Minty variational inequality} (MVI) holds.
This problem class captures non-trivial structures as we demonstrate with examples, for which a large family of existing algorithms provably converge to limit cycles. 
Our results require a less restrictive parameter range in the weak MVI compared to what is previously known, thus extending the applicability of our scheme. 
The proposed algorithm is applicable to constrained and regularized problems, and involves an adaptive stepsize allowing for potentially larger stepsizes. Our scheme also converges globally even in settings where the underlying operator exhibits limit cycles.

\end{abstract}

\section{Introduction}
\label{sec:introduction}

Many machine learning applications, from generative adversarial networks (GANs) to robust reinforcement learning, result in
 nonconvex-nonconcave constrained minimax problems,  
 which pose notorious difficulties to the scalable first order methods. Indeed, there is no shortage of results illustrating divergent or cycling behavior when going beyond minimization problems  \citep{benaim1999mixed,hommes2012multiple,mertikopoulos2018cycles,hsieh2021limits}.

Traditionally, minimax problems have been studied for more than half a century under the umbrella of the variational inequalities (VIs).
The extragradient-type algorithms from the VI literature was recently brought to the awareness of the machine learning community \citep{mertikopoulos2018optimistic,gidel2018variational,bohm2020two}, and have provided a principled way of stabilizing training and avoiding Poincaré recursions. %
However, these results mostly concern the convex-concave setting.

In nonconvex-nonconcave minimax problems, or more generally nonmonotone variational inequalities (VIs), even finding a \emph{local} solution is in general intractable. 
This has been made precise through exponential lower bound of the classical optimization type \citep{hirsch1987exponential} and computational complexity results \citep{papadimitriou1994complexity,daskalakis2021complexity}.
This is in sharp contrast to minimization problems, where only finding a \emph{global} solution is intractable. %
The recent result of \citep{hsieh2021limits} provides some intuition behind this difference by showing that the asymptotic limits of most schemes, including extragradient, can converge to attracting limit cycles. %

To make progress in lieu of these negative results, \citet{diakonikolas2021efficient} proposes a simple generalization of extragradient, called \eqref{eq:eg+}, that can converge to a stationary point even for a class of nonmonotone problems provided that the \emph{weak Minty variational inequality} (MVI) holds. This problem class is parametrized by a constant $\rho$, which controls the degree of nonconvexity. 
However, given the range of $\rho$ in \citet{diakonikolas2021efficient}, the new class is still too small to include even the simplest counterexample of \citet{hsieh2021limits} for the general Robbins-Monro schemes. %

\paragraph{Contributions} Building on the analysis in \citet{diakonikolas2021efficient}, we propose a new adaptive scheme, called \eqref{eq:CurvatureEG}, that converges even in the difficult  counter example of \citet{hsieh2021limits} as illustrated in \cref{fig:forsaken}. %
Our main contributions are summarized below.  
\begin{enumerate}[%
  leftmargin=0pt,
  label={\arabic*.},
  align=left,
  labelwidth=1em,
  itemindent=\labelwidth+\labelsep,
]
\item We propose an adaptive extragradient-type algorithm that converges for a larger range of $\rho$, the parameter in the weak MVI assumption (cf. \cref{ass:Minty:Struct}) than previously known. 

\item More importantly, we show that convergence is ensured if $2\rho + \gamma_k>0$, where $\gamma_k$ is the extrapolation stepsize. This is crucial since by selecting $\gamma_k$ through a backtracking procedure larger stepsizes are allowed, which in turn implies convergence for more negative values of $\rho$, thus capturing a larger class of problems. %
In addition, we show that the linesearch eventually passes without triggering any backtrack if  initialized based on the Jacobian of $F$ (cf. \cref{sec:curvature}).

\item We present a non-adaptive variant of our algorithm \eqref{eq:iter:constant},  and show that for particular parameter choices \eqref{eq:eg+} of \citet{diakonikolas2021efficient}, and when $\rho=0$  the celebrated forward-backward-forward (FBF) algorithm of \citet{Tseng2000modified} are  recovered, thus unifying and generalizing both methods. 
We improve upon \citet{diakonikolas2021efficient} by not only relaxing the problem class but also the stepsize range. 
We show that our results are tight by providing a matching lower bound, thus providing a complete picture of \eqref{eq:eg+} under weak MVI.

\end{enumerate}

\begin{figure}
\centering
  \includegraphics[width=0.35\textwidth]{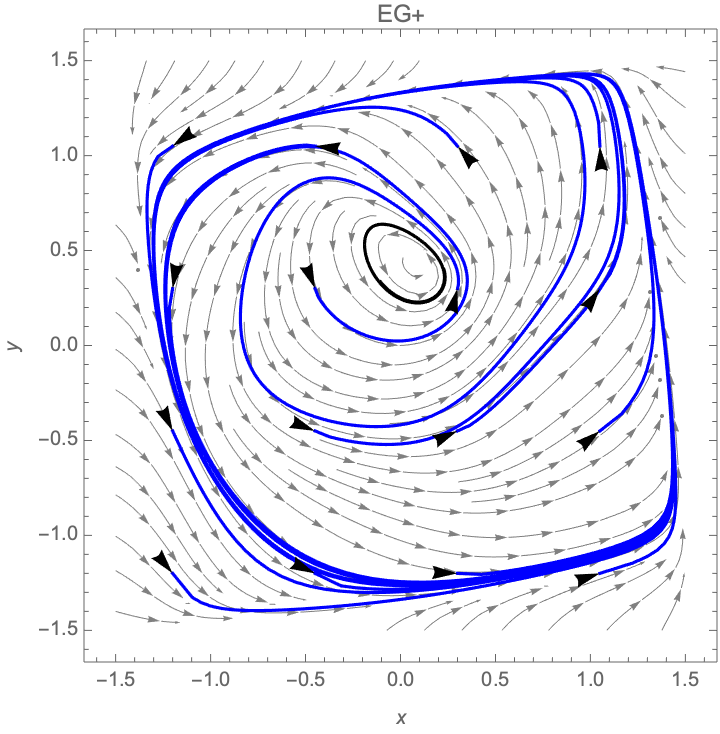}\qquad
  \includegraphics[width=0.35\textwidth]{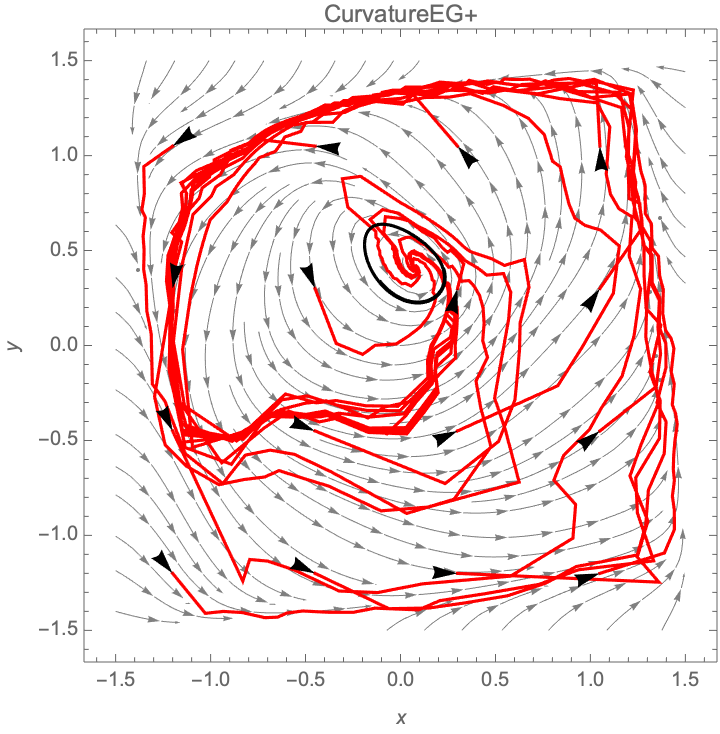}
  \caption{Forsaken \citep[Example 5.2]{hsieh2021limits} provides an example where the weak MVI constant $\rho$ does not satisfy algorithmic requirements of \eqref{eq:eg+}
  and \eqref{eq:eg+} does not converge to a stationary point but rather the attracting limit cycle (left). 
  In contrast, adaptively choosing the extrapolation stepsize large enough with our new method, called \eqref{eq:CurvatureEG}, is sufficient for avoiding the limit cycles (right).
  The repellant limit cycle is indicated in black and the stream plot shows the vectorfield $Fz$.
  \rebuttal{The blue and red curves indicate multiple trajectories of the algorithms starting from initializations indicated in black.
  See \Cref{app:Forsaken} for properties of Forsaken.}
  }
  \label{fig:forsaken}
\end{figure}

\vspace{-4mm}
\paragraph{Related work}

The community has resorted to various approaches to make progress for nonconvex-nonconcave minimax problems.
One line of work focuses on deriving local convergence results \citep{mazumdarFindingLocalNash2019,fiez2020gradient,heusel2017gans}.
For global results, the two primary approaches have been to either assume a global oracle for the inner problem 
\citep{jinWhatLocalOptimality2019,davisStochasticSubgradientMethod2018} or 
assume particular problem structure such as the Polyak-Łojasiewicz condition \citep{nouiehed2019solving,yang2020global} or concavity for the inner problem \citep{rafiqueNonConvexMinMaxOptimization2019}.

We follow the same tradition of assuming structure, but from the general perspective of operator theory.
The idea of studying minimax and related problems through the lens of variational inequality has a long history \citep{minty1962monotone,rockafellar1976monotone,polyak1987introduction,bertsekas1997nonlinear}, with recent renewed interest due to its relevance for minimax formulations \citep{mertikopoulos2018optimistic,gidel2018variational,azizian2020tight}.

One relaxation of the monotone case for which we have positive results is that of Minty variational inequalities (MVI) \citep{mertikopoulos2018optimistic,song2021optimistic,NIPS2017_e6ba70fc,liu2021first}, which includes all quasiconvex-concave and starconvex-concave problems.
\citet{diakonikolas2021efficient} introduced the relaxed condition of weak MVI.
In the unconstrained setting they showed non-asymptotic convergence results under a restricted problem constant $\rho$.
Similarly to us, \citet{lee2021fast} extends the regime but under the stronger condition of cohypomonotonicity.
They do so by studying a more evolved variant of extragradient building on anchoring techniques.
We instead directly improve upon \eqref{eq:eg+} and generalize it to new settings. %

In the stochastic setting, usually the stepsize for the extrapolation step is diminishing.
This is the case in \citet{bohm2020two} where they consider a forward-backward-forward type scheme.
However, they remain in the monotone setting, where the limit cycles are non-attracting, as exemplified by a bilinear game.
\citet{hsieh2021limits} recently showed that a large family of algorithms, which includes the extragradient method with diminishing stepsize, can converge to attracting limit cycles.
Going beyond this restriction, prior to \citet{diakonikolas2021efficient}, \citet{hsieh2020explore} interestingly considers two separate and diminishing stepsizes under the stronger assumption of MVI.

\section{Problem formulation and preliminaries}
\label{sec:setup}

In this paper we are interested in finding zeros of an operator (or set-valued mapping) $T:\R^n\rightrightarrows\R^n$ that is written as the sum of a Lipschitz continuous (but possibly nonmonotone)  operator $F$ and a maximally monotone operator $A$. That is, we wish to find $z\in \R^n$ such that the general inclusion
\begin{equation}\label{eq:StrucIncl}
	0\in Tz \coloneqq Az+ Fz
\end{equation}
holds. The set of all such points is denoted by $\zer T \coloneqq \set{z\in R^n}[0\in Tz]$.  
Throughout the paper problem \eqref{eq:StrucIncl} is studied under the following assumptions (definitions can be found in \Cref{sec:auxiliary}). 
\begin{ass}\label{ass:basic:Struct}
	In problem \eqref{eq:StrucIncl}, 
	\begin{enumerate}
	\item\label{ass:A:Struct}
		Operator $A:\R^n\rightrightarrows\R^n$ is a maximally monotone operator.  
	\item\label{ass:F:Struct}
		Operator $F:\R^n\to \R^n$ is $L$-Lipschitz continuous. %
	\item\label{ass:Minty:Struct} Weak Minty variational inequality (MVI) holds, \ie, there exists a nonempty set $\mathcal S^{\star}\subseteq \zer T$ such that for all $z^\star\in \mathcal S^{\star}$ and some $\rho\in(-\tfrac{1}{2L}, \infty)$
\begin{equation}
		\langle v,z - z^{\star}\rangle \geq \rho\|v\|^2, \quad \text{for all $(z, v)\in \graph T$.}
	\end{equation}
	\end{enumerate}
\end{ass}
Generally, we do not require the weak Minty assumption to hold at every $z^\star\in \zer T$. In fact, as shown in \cref{thm:main:Struct} 
nonemptiness of $\mathcal S^\star$ is sufficient for  ensuring  that the limit points belong to $\zer T$. Interestingly, despite nonmonotonicity of $F$, global (as opposed to subsequential) convergence can be established when $\mathcal S^\star = \zer T$, an assumption that is still weaker than cohypomonotonicity. 

VIs provide a convenient abstraction for a range of problems.
We mention some central examples below but otherwise defer to the overview in \citet{facchinei2007finite}.
Subsequently, we provide examples where the weak MVI holds.

\begin{example}[(minimax optimization).] \label{ex:minimax}
A comprehensive way to capture a wide range of applications in machine learning is to consider structured minimax problems of the form 
\begin{equation}
	\minimize_{x\in\R^{n_x}}\maximize_{y\in\R^{n_y}}\;  \mathcal L(x,y) \coloneqq \varphi(x,y) + g(x) -h(y),
\end{equation}
where $\varphi$ is not necessarily convex in $x$ or concave in $y$. Functions $g$ and $h$ are  proper extended real-valued lower semicontinuous and convex, with easy to compute proximal maps. 
 Common examples for $g$ and $h$ involve regularizers such as $\ell_1$, $\ell_2$ norms, or indicator functions of sets allowing us to capture constrained minimax problems. The first order optimality condition associated with this problem may be written in the form of the structured inclusion \eqref{eq:StrucIncl} by letting $Fz=(\nabla_x \varphi(x,y), -\nabla_y \varphi(x,y))$, $Az=(\partial g(x),\partial h(y))$.  
\end{example}

As it will become clear in the next section (cf. \Cref{alg:WeakMinty:Struct}), the main computations involved in the proposed scheme are evaluations of $F$ and resolvent $J_A = \left( \id + A\right)^{-1}$. Recall that the resolvent of a maximally monotone operator is firmly nonexpansive with full domain (cf. \cite[Sect. 23]{Bauschke2017Convex}). If $A=\partial f$ is the subdifferential operator of a convex function $f$,  then its resolvent is the proximal mapping. For instance when $A$ is as in \cref{ex:minimax}, then its resolvent is given by $J_A(x,y)= (\prox_{g}(x), \prox_h(y))$.

\begin{example}[($N$-player games).]\label{ex:game}
More generally, we can consider a continuous game of $N$ players in normal form.
Denote the decision variables $\bm z:=(z_i;z_{-i}):=(z_1,...,z_N)$ and let the loss incurred by the $i^{\mathrm{th}}$ player be $\mathcal L_i(z_i;z_{-i})=\varphi_i(\bm z)+g_i(z_i)$ where $\varphi_i$ is the payoff function and $g_i$ typically enforce constraints on $z_i$.
Then we seek a Nash equilibrium, which is any decision which is unilaterally stable, i.e.,
\begin{equation}
\mathcal L_i(z_i^\star;z_{-i}^\star) \leq \mathcal L_i(z_i;z_{-i}^\star) \quad \forall z_i\text{ and }i \in [N]\coloneqq \{1,\ldots,N\}.
\end{equation}
The corresponding first order optimality conditions may be written as $Az=(\partial g_1(z_1),\ldots, \partial g_N(z_N))$ and $Fz=(\nabla_{z_1}\varphi_1(\bm z),\ldots,\nabla_{z_N}\varphi_N(\bm z))$.
\end{example}

A solution to \eqref{eq:StrucIncl} thus returns a candidate for which the first order condition of the above problems is satisfied. 
In the monotone case these two solution concepts coincide, while in the more general case of weak MVI, we provide examples where this still holds.
In particular, we introduce in \cref{sec:toy-examples} a nonconvex-nonconcave minimax game which additionally exhibits limit cycles for $Fz$.
As a consequence most schemes including gradient descent ascent, extragradient and optimistic gradient descent ascent do not converge to a stationary point globally \citep{hsieh2021limits}.
However, the global Nash equilibrium satisfies \Cref{ass:Minty:Struct} with $\rho > -\nicefrac{1}{2L}$, which we show is sufficient for global convergence of \eqref{eq:iter:constant}.

The weak MVI condition is satisfied in certain reinforcement learning settings.
Specifically, \citet{diakonikolas2021efficient,daskalakis2021independent} considers a two-player zero-sum game where the weak MVI holds, while neither MVI nor cohypomonotonicity holds.
Interestingly, the formulation requires constraint---a condition they do not handle.
We thus provide the first provable algorithm for this setting.
Weak MVI also contains all quasiconvex-concave and starconvex-concave problems.
For further examples, the literature on cohypomonotonicity \citep{bauschke2020generalized} is relevant since it implies weak MVI, see for instance \citet[Example 1]{lee2021semi}.

\section{Generalizing Extragradient+}\label{sec:Struct}
\label{sec:analysis}

Our starting point is the Extragradient+ \eqref{eq:eg+} algorithm of \citet{diakonikolas2021efficient} which is identical to extragradient \citep{korpelevich1976extragradient} except for the second stepsize being smaller.
They only treat the inclusion \eqref{eq:StrucIncl} when $A\equiv 0$, and in our notation require $\rho\in(-\nicefrac{1}{8L}, 0]$.
Specifically,
\begin{equation}
\label{eq:eg+}
\tag{EG+}
\bar{z}^k = z^k - \gamma_k Fz^k, \quad 
z^{k+1} = z^k - \bar{\alpha}_k \gamma_k F\bar{z}^k
\end{equation}
where they choose $\gamma_k=\nicefrac{1}{L}$ and $\bar{\alpha}_k=\nicefrac 12$ \citep[Thm. 3.2]{diakonikolas2021efficient}.

We generalize \eqref{eq:eg+} in \Cref{alg:WeakMinty:Struct} to take the operator $A$ into account---consequently we capture constraint and regularized problems as well. %
In addition, the scheme is adaptive in $\bar{\alpha}_k$. 
We will show that the weaker requirement of $\rho\in(-\nicefrac{1}{2L}, \infty)$ suffices even for the more general inclusion \eqref{eq:StrucIncl}.

The main convergence results of \Cref{alg:WeakMinty:Struct} are established in the next theorem. The proof is largely inspired by recent developments in operator splitting techniques in the framework of monotone inclusions \citep{Latafat2017Asymmetric,pontus2021nonlinear}. The key idea lies in interpreting each iteration of the algorithm as a projection onto a certain hyperplane, an interpretation that dates back to \cite{Solodov1996Modified,solodov1999hybrid}.
\begin{thm} \label{thm:main:Struct}
Suppose that \cref{ass:basic:Struct} holds, and let $\lambda_k \in (0,2)$, $\gamma_k\in\big(\lfloor -2\rho\rfloor_+ ,\nicefrac1{L}\big]$ where $\lfloor x\rfloor_+\coloneqq \max\{0,x\}$,  ${\delta_k\in(\nicefrac{-\gamma_k}2, \rho]}$, $\liminf_{k\to \infty} \lambda_k(2-\lambda_k)>0$, and $\liminf_{k\to \infty} (\delta_k + \nicefrac{\gamma_k}2)>0$. Consider the sequences $\seq{z^k}$, $\seq{\z^k}$ generated by \Cref{alg:WeakMinty:Struct}. Then for all $z^\star \in \mathcal{S}^\star$, 
\begin{equation}\label{eq:sublin:main}
	{\min_{k=0,1,\ldots,m} \tfrac{1}{\gamma_k^2}\|H\z^k-Hz^k\|^{2}
	} 
	{}\leq{}
\tfrac{1}{\kappa(m+1)}\|z^{0} - z^\star\|^2, 
\end{equation}
where \(\kappa = \liminf_{k\to \infty} \lambda_{k}(2-\lambda_{k})(\delta_{k}+ \nicefrac{\gamma_{k}}{2})^{2}\). Moreover, the following holds 
\begin{enumerate}
	\item \label{thm:main:Struct:limitpoint} $\seq{\z^k}$ is bounded and its limit points  belong to $\zer T$; 
	\item  \label{thm:main:Struct:conv} if in addition $\limsup_{k\to \infty} \gamma_k<\nicefrac1{L}$ and $\mathcal S^\star = \zer T$, then $\seq{z^k}$, $\seq{\z^k}$ both converge to some  $z^\star\in \zer T$. 
\end{enumerate}

\end{thm}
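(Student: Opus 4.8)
The plan is to recast each iteration of \Cref{alg:WeakMinty:Struct} as a \emph{relaxed projection onto a separating hyperplane}, following the hybrid-projection viewpoint cited before the statement. First I would extract from the forward--backward/resolvent step a stationarity certificate: since $\z^k$ is produced by a resolvent evaluation of $A$, its defining inclusion yields a vector of the form $v^k \coloneqq \tfrac{1}{\gamma_k}(Hz^k - H\z^k) + F\z^k - Fz^k \in T\z^k$. Thus the residual $\tfrac1{\gamma_k}\|H\z^k - Hz^k\|$ appearing in \eqref{eq:sublin:main} and $\|v^k\|$ differ only by the $L$-Lipschitz term $F\z^k - Fz^k$, and controlling either one certifies approximate membership in $\zer T$.

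The heart of the argument is to show that $v^k$ is the normal of a hyperplane separating $z^k$ from $\mathcal S^\star$ once it is shifted by the design parameter $\delta_k$. Applying \cref{ass:Minty:Struct} at any $z^\star\in\mathcal S^\star$ to $v^k\in T\z^k$ gives $\langle v^k, \z^k - z^\star\rangle \ge \rho\|v^k\|^2 \ge \delta_k\|v^k\|^2$ because $\delta_k\le\rho$; hence $z^\star$ lies on the far side of the hyperplane passing through $\z^k - \delta_k v^k$. On the near side, $L$-Lipschitzness of $F$ together with $\gamma_k\le\nicefrac1L$ should give a quantitative bound of the form $\langle v^k,\, z^k - \z^k + \delta_k v^k\rangle \ge (\delta_k + \nicefrac{\gamma_k}{2})\,\tfrac{1}{\gamma_k}\|H\z^k - Hz^k\|\,\|v^k\|$. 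Establishing this inequality with \emph{exactly} the constant $\delta_k+\nicefrac{\gamma_k}{2}$, by tracking the cross term between the residual and $F\z^k - Fz^k$ through Young's inequality, is the main obstacle. It is precisely here that the admissible range $\delta_k\in(-\nicefrac{\gamma_k}2,\rho]$ (nonempty thanks to $\gamma_k>\lfloor-2\rho\rfloor_+$, i.e. $2\rho+\gamma_k>0$) forces this constant to be strictly positive and delivers the improved parameter regime.

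With the two-sided separation in hand, the relaxed-projection identity for a halfspace produces the Fejér-type contraction $\|z^{k+1}-z^\star\|^2 \le \|z^k-z^\star\|^2 - \lambda_k(2-\lambda_k)\tfrac{\langle v^k, z^k-\z^k+\delta_k v^k\rangle^2}{\|v^k\|^2}$ for every $z^\star\in\mathcal S^\star$; inserting the bound above, the normalization $\|v^k\|^{-2}$ cancels and leaves the clean quantity $\lambda_k(2-\lambda_k)(\delta_k+\nicefrac{\gamma_k}{2})^2\tfrac1{\gamma_k^2}\|H\z^k-Hz^k\|^2$. From this single inequality I would read off (i) Fejér monotonicity of $\seq{z^k}$ relative to $\mathcal S^\star$, hence boundedness of $\seq{z^k}$ and, via the residual bound and Lipschitzness, of $\seq{\z^k}$; and (ii) after summing over $k=0,\dots,m$ and telescoping, $\sum_k \lambda_k(2-\lambda_k)(\delta_k+\nicefrac{\gamma_k}2)^2\tfrac1{\gamma_k^2}\|H\z^k-Hz^k\|^2\le\|z^0-z^\star\|^2$, so bounding the minimum by the average and inserting $\kappa=\liminf_k\lambda_k(2-\lambda_k)(\delta_k+\nicefrac{\gamma_k}2)^2$ gives exactly \eqref{eq:sublin:main}.

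For \cref{thm:main:Struct:limitpoint}, summability forces $\tfrac1{\gamma_k}\|H\z^k-Hz^k\|\to0$, hence $v^k\to0$; taking a convergent subsequence $\z^{k_j}\to\z^\infty$ and passing to the limit in $v^k - F\z^k\in A\z^k$ using continuity of $F$ and closedness of $\graph A$ gives $0\in A\z^\infty + F\z^\infty = T\z^\infty$, so every limit point lies in $\zer T$. For \cref{thm:main:Struct:conv}, the extra assumption $\mathcal S^\star=\zer T$ makes the Fejér inequality hold relative to \emph{every} solution, while $\limsup_k\gamma_k<\nicefrac1L$ keeps the factor $\nicefrac1{\gamma_k}-L$ bounded away from zero, forcing $z^k-\z^k\to0$ so that $\seq{z^k}$ and $\seq{\z^k}$ share their limit points. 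Combining Fejér monotonicity with respect to $\zer T$ with the fact that every limit point belongs to $\zer T$, the standard Opial/Fejér argument upgrades subsequential to full convergence of both sequences to a single $z^\star\in\zer T$.
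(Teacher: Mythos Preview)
Your overall architecture is exactly the paper's: interpret the update as a relaxed projection onto a separating halfspace, deduce Fej\'er monotonicity with respect to $\mathcal S^\star$, telescope for the rate, and then combine graph-closedness of $A$ with the Fej\'er/Opial argument for (i) and (ii). The endgame you sketch for \cref{thm:main:Struct:limitpoint,thm:main:Struct:conv} matches the paper's almost verbatim.

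There is, however, one concrete slip that would make the ``main obstacle'' you anticipate artificial rather than real. The resolvent step gives $\tfrac{1}{\gamma_k}(z^k-\z^k)-Fz^k\in A\z^k$, hence
\[
\tfrac{1}{\gamma_k}(z^k-\z^k)+F\z^k-Fz^k\in T\z^k.
\]
But since $H=\id-\gamma_kF$, one has $Hz^k-H\z^k=(z^k-\z^k)-\gamma_k(Fz^k-F\z^k)$, so the element above is \emph{exactly} $\tfrac{1}{\gamma_k}(Hz^k-H\z^k)$. In other words the correct certificate is
\[
v^k=\tfrac{1}{\gamma_k}(Hz^k-H\z^k)\in T\z^k,
\]
and your additional ``$+\,F\z^k-Fz^k$'' double-counts the $F$-correction. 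With the correct $v^k$ one has $\|v^k\|=\tfrac{1}{\gamma_k}\|H\z^k-Hz^k\|$ \emph{identically}, so there is no cross term to tame via Young's inequality. The near-side bound is then a one-liner: $\gamma_kF$ is $1$-Lipschitz (because $\gamma_k\le\nicefrac1L$), so $H$ is $\nicefrac12$-cocoercive, giving
\[
\langle v^k,\,z^k-\z^k\rangle+\delta_k\|v^k\|^2
=\tfrac{1}{\gamma_k}\langle Hz^k-H\z^k,\,z^k-\z^k\rangle+\delta_k\|v^k\|^2
\ge\bigl(\tfrac{\gamma_k}{2}+\delta_k\bigr)\|v^k\|^2,
\]
with exactly the constant you need. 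This is precisely the paper's \eqref{eq:posAlpha}. After this correction your Fej\'er inequality and the cancellation you describe go through cleanly and coincide with the paper's \eqref{eq:Fejer}.
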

Note that whenever $\limsup_{k\to\infty}\gamma_k < \tfrac1{L}$, \cref{lem:H:SM} may be used to derive a similar inequality in terms of $\|\z^{k}-z^{k}\|$ by lower bounding $\|H\z^k - Hz^k\|$ in \eqref{eq:sublin:main}. We also remark that tighter rates may be obtained in the regime $\rho\geq 0$, however, this will not be pursued in this work.

\begin{algorithm}[tb]
    \caption{(AdaptiveEG+) Deterministic algorithm for problem \eqref{eq:StrucIncl}}%
    \label{alg:WeakMinty:Struct}%
    \begin{algorithmic}[1]
\Initialize
	\(z^0 = z^{\rm init}\in\R^n\), $\lambda_k \in (0,2)$, $\gamma_k\in\big(\lfloor -2\rho\rfloor_+,\nicefrac1{L}\big]$, ${\delta_k\in(\nicefrac{-\gamma_k}2, \rho]}$, 
\item[\algfont{Repeat} for \(k=0,1,\ldots\) until convergence]

\State\label{state:barz:Struct}%
	Let 
	\(
		\bar z^k
	{}={}
		\big(\id + \gamma_k A\big)^{-1}\big(z^k - \gamma_k Fz^k \big)
	\) 
\State\label{State:alpha:Struct}%
Compute stepsize 
	\[
		\alpha_k 
	{}={}   
		\tfrac{\delta_k}{\gamma_k} + \frac{\langle \z^k - z^k,  H\z^k - Hz^k\rangle }{\|H\z^k - Hz^k\|^2}, 
	\]%
	\ \ \ where \(H=\id -\gamma_k F\).
\State\label{state:Finito:s+:Struct}%
Update the vector
	\(
		z^{k+1}
	{}={}
		z^k
		{}+{}
		\lambda_k \alpha_k (H\z^k - Hz^k)  
	\)
	
\item[\algfont{Return}]
	\(z^{k+1}\)
\end{algorithmic}

\end{algorithm}

    \subsection{Non-adaptive stepsize variant} 
    \label{sec:cnstStep}

Although we do not incur additional costs for evaluating the adaptive stepsize $\alpha_k$ in \cref{State:alpha:Struct}, it proves instructive to present a variant with constant stepsize. As a result we compare the range of our stepsizes against  \cite{diakonikolas2021efficient} showing an improvement by a factor of $\nicefrac32$. Moreover, in the monotone case ($\rho=0$), with a certain choice of stepsizes the algorithm reduces to the celebrated \emph{forward-backward-forward} (FBF) algorithm of \cite{Tseng2000modified}. We remark that the relation of FBF to projection-type algorithms was noted in \cite{Tseng2000modified}, \cite[Sect. 6.2.1]{pontus2021nonlinear}.

To this end, in this subsection consider the following non-adaptive variant of \Cref{alg:WeakMinty:Struct} that generalizes \eqref{eq:eg+}. Letting $\bar\alpha_k \in (0, 1 + \nicefrac{2\delta_k}{\gamma_k})$: 
\begin{equation}\tag{CEG+}\label{eq:iter:constant}
	\bar{z}^k 
		{}={}
	\big(\id + \gamma_k A\big)^{-1}\big(z^k - \gamma_k Fz^k\big), \quad 
	z^{k+1}
		{}={}
	z^k
		{}+{}
	\bar\alpha_k (H\z^k - Hz^k).%
\end{equation}
The convergence of this algorithm is an immediate byproduct of \cref{thm:main:Struct}. To see this, note that the $z^{k+1}$ update in \cref{state:Finito:s+:Struct} may be written as 
\(
	z^{k+1}
		{}={}
	z^k
		{}+{}
	2\eta_k\alpha_k (H\z^k - Hz^k)
\),
for $\eta_k\in(0,1)$. Therefore, convergence is still ensured for any $\bar\alpha_k < 2\alpha_k$ as the difference may be absorbed by the relaxation parameter $\eta_k$. 
Note that by $\nicefrac12$-cocoercivity of $H$ (cf. \cref{lem:H:coco})
\begin{equation}\label{eq:baralpha}
	\bar\alpha_k < \tfrac{2\delta_k}{\gamma_k} + 1
		 {}\leq{}   
	\tfrac{2\delta_k}{\gamma_k} + \frac{2\langle H\z^k - Hz^k, \z^k - z^k \rangle }{\|H\z^k - Hz^k\|^2} = 2\alpha_k,	
\end{equation}
establishing the validity of the prescribed stepsize range. The convergence of the non-adaptive variant is summarized in the next corollary that for simplicity is stated with constant parameters (dropping subscripts $k$). 

\begin{cor}[Constant stepsize] \label{cor:constant:Struct}
Suppose that \cref{ass:basic:Struct} holds, and let $\gamma\in\big(\lfloor-2\rho\rfloor_+, \nicefrac1{L}\big]$,  ${\delta\in(\nicefrac{-\gamma}2, \rho]}$, and $\bar\alpha \in (0, 1 + \nicefrac{2\delta}{\gamma})$. Consider the sequences $\seq{z^k}$, $\seq{\z^k}$ generated according to the update rule \eqref{eq:iter:constant}. Then, 
\begin{equation}\label{eq:sublinrate}
	{\min_{k=0,1,\ldots,m}\|H\z^k-Hz^k\|^{2}
	} 
	{}\leq{}
\frac{\|z^{0} - z^\star\|^2}{\kappa(m+1)}, 
\end{equation}
where $\kappa =\bar\alpha(1+\tfrac{2\delta}{\gamma} - \bar\alpha)$. Moreover, the claims of \cref{thm:main:Struct:limitpoint,thm:main:Struct:conv} hold true. 
\end{cor}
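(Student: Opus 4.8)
The plan is to read off \eqref{eq:iter:constant} as the constant-stepsize instance of \Cref{alg:WeakMinty:Struct} and then to pin down the explicit constant $\kappa$ by a self-contained one-step descent estimate. For the reduction, note that the resolvent step of \eqref{eq:iter:constant} coincides with \cref{state:barz:Struct}, while the update $z^{k+1}=z^k+\bar\alpha(H\z^k-Hz^k)$ is reproduced by \cref{state:Finito:s+:Struct} upon setting $\lambda_k\coloneqq\bar\alpha/\alpha_k$, with $\alpha_k$ the adaptive stepsize of \cref{State:alpha:Struct}. The $\tfrac12$-cocoercivity of $H$ (\cref{lem:H:coco}) gives $\alpha_k\geq\tfrac{\delta}{\gamma}+\tfrac12=\tfrac12(1+\tfrac{2\delta}{\gamma})>0$, and together with \eqref{eq:baralpha} and $\bar\alpha<1+\tfrac{2\delta}{\gamma}$ this places $\lambda_k$ strictly inside $(0,2)$; hence the hypotheses of \Cref{thm:main:Struct} are met and \cref{thm:main:Struct:limitpoint,thm:main:Struct:conv} carry over (with \cref{thm:main:Struct:conv} requiring $\gamma<\tfrac1L$). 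I would use this reduction only to transfer the qualitative limit-point and convergence claims, and derive the rate directly, since the theorem's decrease is governed by the iterate-dependent $\lambda_k$ and does not transparently collapse to the stated $\kappa$.

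To obtain the explicit constant I would work with $d^k\coloneqq H\z^k-Hz^k$. The resolvent identity in \cref{state:barz:Struct} yields $v^k\coloneqq\tfrac1\gamma(z^k-\z^k)-Fz^k+F\z^k\in A\z^k+F\z^k=T\z^k$, and a direct computation gives $d^k=-\gamma v^k$, so $(\z^k,v^k)\in\graph T$ with $\|v^k\|^2=\tfrac1{\gamma^2}\|d^k\|^2$. Applying \cref{ass:Minty:Struct} at $(\z^k,v^k)$ and any $z^\star\in\mathcal S^\star$, and using $\delta\leq\rho$, gives $\langle d^k,\z^k-z^\star\rangle=-\gamma\langle v^k,\z^k-z^\star\rangle\leq-\tfrac\rho\gamma\|d^k\|^2\leq-\tfrac\delta\gamma\|d^k\|^2$, while $\tfrac12$-cocoercivity of $H$ gives $\langle d^k,z^k-\z^k\rangle\leq-\tfrac12\|d^k\|^2$. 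Summing the two bounds yields $\langle d^k,z^k-z^\star\rangle\leq-(\tfrac12+\tfrac\delta\gamma)\|d^k\|^2$, and substituting into $\|z^{k+1}-z^\star\|^2=\|z^k-z^\star\|^2+2\bar\alpha\langle d^k,z^k-z^\star\rangle+\bar\alpha^2\|d^k\|^2$ produces the Fejér-type decrease
\[
\|z^{k+1}-z^\star\|^2\leq\|z^k-z^\star\|^2-\bar\alpha\big(1+\tfrac{2\delta}\gamma-\bar\alpha\big)\|d^k\|^2=\|z^k-z^\star\|^2-\kappa\|d^k\|^2,
\]
with $\kappa>0$ precisely because $0<\bar\alpha<1+\tfrac{2\delta}\gamma$. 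Telescoping over $k=0,\dots,m$ and lower-bounding the sum by $(m+1)\min_{k\le m}\|d^k\|^2$ then gives \eqref{eq:sublinrate}.

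The main obstacle is reconciling the two rate constants: invoking \Cref{thm:main:Struct} directly yields a decrease governed by the $k$-dependent quantity $\lambda_k(2-\lambda_k)(\delta+\tfrac\gamma2)^2/\gamma^2$, since $\lambda_k=\bar\alpha/\alpha_k$ varies with the adaptive $\alpha_k$, and this does not obviously simplify to the clean $\bar\alpha(1+\tfrac{2\delta}\gamma-\bar\alpha)$; the direct descent sidesteps this by never introducing $\alpha_k$, replacing the projection coefficient by the fixed parameter $\delta$ through the weak-MVI inequality, and only the chain $\delta\leq\rho$ (for the weak-MVI step) and $\gamma\leq\tfrac1L$ (for cocoercivity of $H$) are needed. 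For the boundedness and convergence statements one may either invoke the theorem via the reduction or argue directly from the displayed inequality: Fejér monotonicity and $\sum_k\|d^k\|^2<\infty$ force $d^k\to0$, hence (by Lipschitzness of $F$) $\z^k-z^k\to0$, so every limit point is a zero of $T$ by closedness of $\graph T$; when $\mathcal S^\star=\zer T$ and $\gamma<\tfrac1L$, Fejér monotonicity with respect to all of $\zer T$ upgrades this to convergence of the whole sequence.
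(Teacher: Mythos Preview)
Your proposal is correct and follows essentially the same two-part approach as the paper: the convergence claims are obtained by reducing \eqref{eq:iter:constant} to \Cref{alg:WeakMinty:Struct} with $\lambda_k=\bar\alpha/\alpha_k\in(0,2)$ (the paper phrases this as absorbing the difference into a relaxation parameter), while the explicit constant $\kappa$ is derived directly by combining the weak MVI bound \eqref{eq:mintyH} with $\tfrac12$-cocoercivity of $H$ to get $\langle H\z^k-Hz^k,\,z^k-z^\star\rangle\leq-(\tfrac12+\tfrac\delta\gamma)\|H\z^k-Hz^k\|^2$, then expanding $\|z^{k+1}-z^\star\|^2$ and telescoping. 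One minor caution in your alternative direct argument at the end: the implication ``$d^k\to0\Rightarrow\z^k-z^k\to0$'' uses \cref{lem:H:SM} and hence needs $\gamma<\tfrac1L$; for the limit-point claim at $\gamma=\tfrac1L$ one works instead with boundedness of $\seq{\z^k}$ and passes to the limit in $v^k\in T\z^k$, exactly as in the proof of \Cref{thm:main:Struct}.
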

The setting of \cite{diakonikolas2021efficient} in \eqref{eq:eg+} involves the stepsizes $\gamma_k=\nicefrac{1}{L}$, $\alpha_k=\nicefrac12$. Note that when restricting to $A\equiv 0$, the iterates \eqref{eq:iter:constant} simplify to this form owing to the fact that  $H\z^k-Hz^k=\z^k - \gamma F\z^k-Hz^k = -\gamma F\z^k$. 
In comparison, in our setting if $\delta=\rho=-\nicefrac{1}{8L}$ (the smallest $\rho$ permitted in \cite{diakonikolas2021efficient}) is selected, then based on our analysis in \cref{cor:constant:Struct} we may select $\gamma_k  = \nicefrac{1}{L}$, and $\bar\alpha_k \in(0, \nicefrac34)$, thus the upper bound for the second stepsize is $\nicefrac32$ times that of \cite{diakonikolas2021efficient}.

\begin{rem}[relation to FBF]
	In \cref{cor:constant:Struct} the range of stepsizes $\gamma$, $\bar\alpha$ may alternatively be set as $\gamma\in\big(\lfloor-2\rho\rfloor_+, \nicefrac1{L}\big)$, $\bar\alpha \in (0, 1 + \nicefrac{2\delta}{\gamma}]$. This  is due to the fact that if $\gamma<\nicefrac1{L}$ (strictly), then $H$ is strictly $\nicefrac12$-cocoercive. Therefore, in \eqref{eq:baralpha}, $1 + \tfrac{2\delta}{\gamma} < 2\alpha_k$ holds, and thus the stepsize $\bar\alpha =1 + \tfrac{2\delta}{\gamma}$ is permitted.   
	Although this may appear to be of little practical significance, by setting $\gamma\in(0,\nicefrac{1}{L})$, $\delta = \rho=0$, and $\bar\alpha =1$ in \eqref{eq:iter:constant}, we obtain
	\(
	z^{k+1} = \z^k + \gamma Fz^k - \gamma F\z^k
	\), which is the forward-backward-forward (FBF) algorithm of \cite{Tseng2000modified}, \cite[Thm. 26.17]{Bauschke2017Convex}). 
\end{rem}

    \subsection{Lower bounds}
    \label{sec:lowerbounds}

We show that the result in \cref{cor:constant:Struct} is tight by providing a matching lower bound when $A\equiv 0$. 
We do so by fixing $\bar{\alpha}_k$ and showing a stepsize dependent lower bound.
In particular, note that if $\bar{\alpha}_k=\nicefrac{1}{2}$ as in \citet[Thm. 3.2]{diakonikolas2021efficient}, then \Cref{thm:lowerbound} implies a lower bound of $\rho > -\nicefrac{1}{4L}$ for the \eqref{eq:eg+} scheme. %
The lower bound is contextualized in \cref{fig:lowerbound} by relating it to our convergence results and existing results in the literature.

\begin{thm}\label{thm:lowerbound}
Consider a sequence $(z^k)_{k \in \mathbb N}$ generated according to \eqref{eq:eg+} fixing $\gamma_k = \gamma =\nicefrac{1}{L}$ and $\bar{\alpha}_k=\bar{\alpha}\in(0,1)$.
Let $-\rho L\geq\frac{1-\bar{\alpha} }{2}$.
Then, there exists an $F:\R^n\to \R^n$, $n>1$, satisfying \cref{ass:F:Struct} and \cref{ass:Minty:Struct} for which the sequence will not converge.
\end{thm}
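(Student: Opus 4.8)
The plan is to exhibit a single linear hard instance on which \eqref{eq:eg+} provably fails to converge, chosen so that the algebra of its iteration matrix reproduces exactly the stated threshold $-\rho L\geq\tfrac{1-\bar\alpha}{2}$. Since $A\equiv 0$, I would take $n=2$ and set $Fz=Mz$ with the rotation--scaling matrix
\[
	M=\begin{pmatrix} a & -b\\ b & a\end{pmatrix},\qquad a=\rho L^2,\quad b=L\sqrt{1-\rho^2L^2}.
\]
Because \cref{ass:Minty:Struct} forces $\rho>-\tfrac1{2L}$ and the hypothesis gives $\rho\leq-\tfrac{1-\bar\alpha}{2L}$, we have $\rho L\in(-\tfrac12,0)$, so $b$ is real and strictly positive and $\det M=a^2+b^2=L^2>0$, whence $\zer T=\{0\}$. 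First I would verify the two structural assumptions: $M^\top M=L^2I$ gives $\|Mz\|=L\|z\|$, so $F$ is $L$-Lipschitz (\cref{ass:F:Struct}); and since the symmetric part of $M$ is $aI$, one has $\langle Mz,z\rangle=a\|z\|^2=\rho L^2\|z\|^2=\rho\|Mz\|^2$, so \cref{ass:Minty:Struct} holds at $z^\star=0$ with constant exactly $\rho$ (and, by monotonicity of the condition in $\rho$, for every smaller constant too).

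Next I would linearize the recursion. As $F$ is linear, \eqref{eq:eg+} with $\gamma=\nicefrac1L$ reads $z^{k+1}=Tz^k$ with $T=\id-\bar\alpha\gamma M+\bar\alpha\gamma^2M^2$. The eigenvalues of $M$ are $a\pm ib$, so setting $w=\gamma(a+ib)=(a+ib)/L$ one has $|w|=1$; writing $c=\cos\phi=a/L=\rho L$ and $s=\sin\phi=b/L$, so that $w=e^{i\phi}$, the corresponding eigenvalue of $T$ is
\[
	\mu=1-\bar\alpha w+\bar\alpha w^2 .
\]
The crux is a short computation: separating real and imaginary parts and using $s^2=1-c^2$, the $c^2$-terms cancel and one is left with the clean factorization
\[
	|\mu|^2-1=4\bar\alpha(c-1)\Big(c-\tfrac{\bar\alpha-1}{2}\Big).
\]

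Everything then follows by sign analysis. Since $0<\bar\alpha<1$ and $c=\rho L<0<1$, the factor $(c-1)$ is negative, so $|\mu|\geq 1$ holds precisely when $c\leq\tfrac{\bar\alpha-1}{2}$, i.e. $\rho L\leq-\tfrac{1-\bar\alpha}{2}$, which is exactly the hypothesis. I would conclude by reading off non-convergence from the spectrum of the real matrix $T$: the imaginary part of $\mu$ equals $\bar\alpha s(2c-1)$, which is nonzero because $s\neq0$ and $2c-1=2\rho L-1<0$; hence $\mu\neq 1$ and $\mu\neq\bar\mu$. In the boundary case $|\mu|=1$, $T$ is similar to a nontrivial rotation, so every orbit with $z^0\neq0$ stays on an origin-centered ellipse, bounded away from the unique zero and never settling; in the strict case $|\mu|>1$ one has $\|z^k\|\to\infty$. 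Either way $\seq{z^k}$ does not converge.

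The main obstacle is purely the bookkeeping behind the factorization of $|\mu|^2-1$, and making sure the threshold lands exactly on $-\rho L=\tfrac{1-\bar\alpha}{2}$ so that the lower bound matches the stepsize range of \cref{cor:constant:Struct}. The Lipschitz and weak-MVI checks, the monotonicity argument allowing the weak-MVI constant to be fixed at exactly $\rho$, the spectral non-convergence conclusion, and the trivial embedding of $M$ into $\R^n$ for $n>1$ (padding with a zero block) are all routine once $M$ is chosen.
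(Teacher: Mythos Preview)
Your proof is correct and follows essentially the same approach as the paper: both construct a linear operator $Fz=Mz$ of rotation--scaling type (the paper's arises from the bilinear minimax problem $axy+\tfrac{b}{2}(x^2-y^2)$, giving the matrix $\bigl(\begin{smallmatrix} b & a\\ -a & b\end{smallmatrix}\bigr)$, which is your $M$ up to relabelling), identify the \eqref{eq:eg+} recursion as $z^{k+1}=Tz^k$, and show that the spectral radius of $T$ is at least $1$ precisely when $-\rho L\geq\tfrac{1-\bar\alpha}{2}$. Your execution is cleaner: parameterizing $M$ directly in terms of $(\rho,L)$ and working with the single complex eigenvalue $\mu=1-\bar\alpha w+\bar\alpha w^2$ yields the factorization $|\mu|^2-1=4\bar\alpha(c-1)\bigl(c-\tfrac{\bar\alpha-1}{2}\bigr)$ by hand, whereas the paper writes out $T$ entrywise and defers the spectral-radius algebra to Mathematica; you also explicitly dispose of the boundary case $|\mu|=1$ by checking $\operatorname{Im}\mu\neq0$, which the paper glosses over.
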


\begin{figure}
\centering
\includegraphics[width=0.4\textwidth]{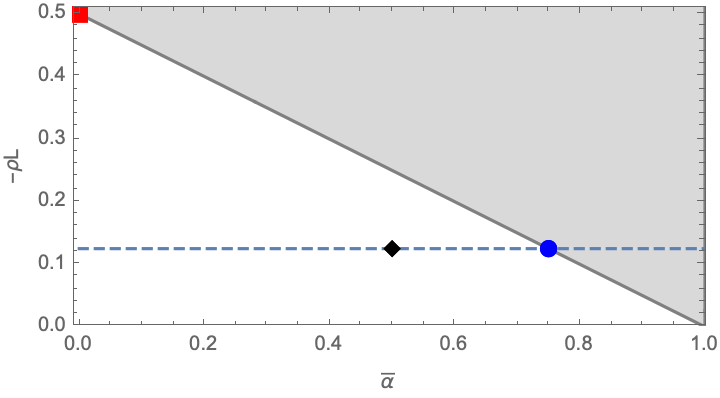}
\caption{
The grey region indicates where convergence provably cannot be guaranteed by \cref{thm:lowerbound}.
The dashed line indicates where $\rho=-\nicefrac 1{8L}$.
This is the condition under which \citep[Thm. 3.2]{diakonikolas2021efficient} shows the first convergence result $(\textcolor{black}{\blackdiamond})$.
\Cref{cor:constant:Struct} improves their result by matching the lower bound for any $\bar{\alpha}$, in particular for $\bar{\alpha}=\nicefrac{3}{4}$
$(\textcolor{blue}{\CIRCLE})$.
The adaptive scheme in \cref{thm:main:Struct} matches the smallest possible $\rho$ for any \eqref{eq:eg+} scheme with fixed stepsize $(\textcolor{red}{\blacksquare})$.
}
\label{fig:lowerbound}
\end{figure}

\section{Adaptively taking larger stepsizes using local curvature}
\label{sec:curvature}

\begin{algorithm}[b]
    \caption{Lipschitz constant backtracking}%
    \label{alg:LSBacktrack}%
    \begin{algorithmic}[1]
\Initialize
	\(z^k \in\R^n\), $\tau\in(0,1)$, $\nu\in(0,1)$
\State\label{state:BackTrack:Struct} 
\parbox[t]{0.98\linewidth}{%
Set initial guess $\gamma= \gamma^{\rm init}$, and let $\G_\gamma(z^k) \coloneqq \big(\id + \gamma A\big)^{-1}\big(z^k - \gamma Fz^k \big)$
\Statex{\bf while}
{
\(
	\gamma\|F(\G_\gamma(z^k)) - Fz^k\| >  \nu\|\G_\gamma(z^k) - z^k\|
\)}
{\bf do}
{
\(\gamma \gets \tau\gamma\)
}
}
	
\item[\algfont{Return}]
	\(\gamma_k = \gamma\) and \(\z^k = \G_\gamma(z^k)\)
\end{algorithmic}

\end{algorithm}

As made apparent in the analysis in \cref{sec:analysis}  (cf. \cref{proof:thm:main:Struct}) the bound on the smallest weak MVI constant $\rho$ in \cref{ass:Minty:Struct}  may be replaced with the requirement that $\rho>-\nicefrac{\gamma_k}2$ for all $k\in\N$. Therefore, larger stepsizes $\gamma_k$ would guarantee global convergence for an even larger class of problems.
Since a \emph{global} Lipschitz constant is inherently pessimistic the natural question then becomes how to locally choose a maximal stepsize without diverging. %

The proposed scheme involves a backtracking linesearch that uses the local curvature for its initial guess. 
The reason being that this will immediately pass, close enough to the solution $z^\star$, by argument of continuity.
More precisely, we will set the initial guess to something slightly smaller than $\|JF(z^k)\|^{-1}$,
where $JF(z)$ denotes the Jacobian of $F$ at $z$ and $\|\cdot\|$ is the spectral norm.
Note that, despite the use of second order information, the scheme remains efficient since $\|\jac F{z}\|$ only requires one eigenvalue computation performed through Jacobian-vector product \citep{pearlmutter1994fast}. %

Given an initial point $z^0 = z^{\rm init}$ and $\nu\in (0,1)$, the final scheme which we denote \eqref{eq:CurvatureEG} proceeds for $k=0,1,\dots$ as follows: %
\begin{equation}
\label{eq:CurvatureEG}
\tag{CurvatureEG+}
\begin{split}
1.\ & \text{Obtain $\gamma_k$ and $\z^k$ according to \Cref{alg:LSBacktrack} with $\gamma^{\mathrm{init}} = \nu \|JF(z^k)\|^{-1}$} \\
2.\ & \text{Compute $z^{k+1}$ according to \cref{State:alpha:Struct,state:Finito:s+:Struct} of \Cref{alg:WeakMinty:Struct} }%
\end{split}
\end{equation}

The above intuitive reasoning is made precise in the next lemma where it is shown that backtracking linesearch will terminate in finite time and that $\gamma^{\mathrm{init}}$ will be immediately accepted asymptotically.

\begin{lem}[Lipschitz constant backtracking]\label{lem:LS}
	Suppose that $F:\R^n\to\R^n$ is a $L$-Lipschitz continuous operator. Consider the linesearch procedure in \Cref{alg:LSBacktrack}. Then, 
	\begin{enumerate}
		\item \label{lem:LS:lb} The linesearch terminates in finite time with $\gamma \geq \min\{\gamma^{\rm init},\nicefrac{\nu\tau}{L}\}$; %
		\item\label{lem:LS:acc} Suppose that $\seq{z^k}$ converges to $z^\star\in \zer T$. If $F$ is continuously differentiable, and $\gamma^{\rm init} \in (0,\nu\|\jac F{z^k}\|^{-1})$ with $\nu\in(0,1)$, then eventually the backtrack will never be invoked ($\gamma^{\rm init}$ would be accepted). 
	\end{enumerate}

\end{lem}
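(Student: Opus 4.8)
The plan is to use the global $L$-Lipschitzness of $F$ to certify that the while-condition must eventually fail. Writing $\z = \G_\gamma(z^k)$, the bound $\|F(\z)-Fz^k\|\le L\|\z-z^k\|$ shows that as soon as $\gamma\le\nicefrac{\nu}{L}$ one has $\gamma\|F(\z)-Fz^k\|\le \gamma L\|\z-z^k\|\le \nu\|\z-z^k\|$, so the loop condition is violated and backtracking halts. Since each backtrack multiplies $\gamma$ by $\tau\in(0,1)$, after finitely many steps $\gamma$ falls to or below $\nicefrac{\nu}{L}$, which gives finite termination. For the lower bound I would split into two cases: if the first guess already passes, the returned stepsize equals $\gamma^{\rm init}$; otherwise the last \emph{rejected} value $\nicefrac{\gamma}{\tau}$ satisfies the while-condition, so by the contrapositive of the computation above it must exceed $\nicefrac{\nu}{L}$, whence $\gamma>\nicefrac{\nu\tau}{L}$. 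In either case $\gamma\ge\min\{\gamma^{\rm init},\nicefrac{\nu\tau}{L}\}$.

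\textbf{Part 2 (asymptotic acceptance).} The goal is to show that for all large $k$ the single test at $\gamma=\gamma^{\rm init}_k$ passes, i.e.\ $\gamma^{\rm init}_k\|F(\z^k)-Fz^k\|\le\nu\|\z^k-z^k\|$ with $\z^k=\G_{\gamma^{\rm init}_k}(z^k)$. First I would establish $\z^k\to z^\star$: since $z^\star\in\zer T$ satisfies $z^\star=(\id+\gamma A)^{-1}(z^\star-\gamma Fz^\star)=\G_\gamma(z^\star)$ for every $\gamma>0$, and the resolvent is nonexpansive while $z\mapsto z-\gamma Fz$ is $(1+\gamma L)$-Lipschitz, we obtain $\|\z^k-z^\star\|\le(1+\gamma^{\rm init}_k L)\|z^k-z^\star\|\to0$ (the guesses $\gamma^{\rm init}_k$ stay bounded because $\|JF(z^k)\|\to\|JF(z^\star)\|$ by continuity of the Jacobian). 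Hence both endpoints of the segment $[z^k,\z^k]$ collapse to $z^\star$. Next, by the fundamental theorem of calculus $F(\z^k)-Fz^k=\int_0^1 JF\big(z^k+t(\z^k-z^k)\big)(\z^k-z^k)\,dt$, so the ratio $\nicefrac{\|F(\z^k)-Fz^k\|}{\|\z^k-z^k\|}$ is bounded by $M_k:=\sup_{t\in[0,1]}\|JF(z^k+t(\z^k-z^k))\|$ (the case $\z^k=z^k$ being trivial).

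Because $JF$ is continuous and the segment shrinks to $z^\star$, $M_k\to\|JF(z^\star)\|$, and combined with $\|JF(z^k)\|\to\|JF(z^\star)\|$ this gives $\nicefrac{M_k}{\|JF(z^k)\|}\to1$. Invoking the strict hypothesis $\gamma^{\rm init}_k<\nu\|JF(z^k)\|^{-1}$, the test quantity is then controlled by $\gamma^{\rm init}_k M_k<\nu\,\nicefrac{M_k}{\|JF(z^k)\|}\to\nu$, which yields acceptance for all large $k$. The main obstacle is precisely closing this final comparison: the crude inequality $M_k\ge\|JF(z^k)\|$ is entirely possible (the Jacobian norm may grow along the segment), so one must argue that $\nicefrac{M_k}{\|JF(z^k)\|}\to1$ quickly enough relative to the strict slack in $\gamma^{\rm init}_k<\nu\|JF(z^k)\|^{-1}$. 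This is where \emph{uniform} continuity of $JF$ on a compact neighbourhood of $z^\star$ — controlling $\sup_t\|JF(z^k+t(\z^k-z^k))\|$ against $\|JF(z^k)\|$ as the segment degenerates — does the real work, and it is the step I would treat with the most care.
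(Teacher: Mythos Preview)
Your Part~1 is essentially identical to the paper's argument. For Part~2 the paper follows the same outline but packages the ratio estimate more abstractly: instead of your fundamental-theorem-of-calculus bound via $M_k=\sup_{t\in[0,1]}\|JF(z^k+t(\bar z^k-z^k))\|$, it passes to the Lipschitz modulus $\operatorname{lip}F(z^\star)=\limsup_{z,z'\to z^\star}\|Fz-Fz'\|/\|z-z'\|$ and cites Rockafellar--Wets, Thm.~9.7, for the identity $\operatorname{lip}F(z^\star)=\|JF(z^\star)\|$; your integral computation is precisely the elementary content behind that citation, so the two routes coincide in substance. The paper also derives $G_\gamma(z^\star)=z^\star$ via a short monotonicity argument on $A$ rather than the direct inclusion $-Fz^\star\in Az^\star\Rightarrow z^\star-\gamma Fz^\star\in(\id+\gamma A)z^\star$ that you use; both are fine, yours is the more direct. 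Finally, the closing step you single out as the ``main obstacle'' is dispatched in the paper by a one-line ``the claim follows from continuity of $JF$''; your diagnosis that one must compare the rate at which $M_k/\|JF(z^k)\|\to1$ against the strict slack in $\gamma_k^{\rm init}<\nu\|JF(z^k)\|^{-1}$ is exactly the crux, and in fact neither argument closes cleanly unless the hypothesis is read as providing a \emph{uniform} margin (e.g.\ $\gamma_k^{\rm init}\le\nu'\|JF(z^k)\|^{-1}$ for some fixed $\nu'<\nu$), so your caution there is well placed rather than excessive.
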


The convergence results for \eqref{eq:CurvatureEG} are deduced based of the above lemma and \cref{thm:main:Struct} and are provided in \cref{cor:curvatureEG} in \cref{sec:proof:curvature}. 
We illustrate the behavior of \eqref{eq:CurvatureEG} in \cref{fig:forsaken} and in \cref{sec:experiments}. %

\section{Constructing toy examples}
\label{sec:toy-examples}

When \cref{ass:Minty:Struct} holds for negative $\rho$, limit cycles of the underlying operator $Fz$ can emerge.
We illustrate this with simple polynomial examples for which all the properties of interest can be computed in closed form.

\begin{definition}[PolarGame]
\label{def:polargame}
A PolarGame denotes a two-player game whose associated operator $F$ has limit cycles at $\|z\|_2=c_i$ for all $i\in[k]$ where $c_i\neq 0$.
\end{definition}
This turns out to be particularly easy to construct in polar coordinates as the name suggests (see \cref{app:polargame}).
Apart from introducing arbitrary number of limit cycles it also gives us control over $\rho$.
This is illustrated in the following instantiations capturing three important cases.%
\begin{example}[(PolarGame).]
\label{ex:polargame}
Consider $Fz = \left(\psi(x,y)-y, \psi(y,x)+x\right)$
where $\|z\|_\infty \leq \nicefrac{11}{10}$
and $\psi(x,y)=\frac{1}{16} a x (-1 + x^2 + y^2) (-9 + 16 x^2 + 16 y^2)$.
We have the following three cases:

\textit{(i)}
\ 
  $a=1$ then
  $\rho \in ( -\frac{1}{L},-\frac{1}{2L} )$
\ 
\textit{(ii)}
\ 
  $a=\frac{3}{4}$ then
  $\rho \in (-\frac{1}{2L}, -\frac{1}{3L})$
\ 
\textit{(iii)}
\ 
  $a=\frac{1}{3}$ then
  $\rho \in (-\frac{1}{8L}, -\frac{1}{10L})$

\rebuttal{where $L$ denotes the Lipschitz constant of $F$ restricted to the constraint set.}
For all cases $F$ exhibits limit cycles at $\|z\|=1$ and $\|z\|=\nicefrac{3}{4}$.
Proof is deferred to \cref{app:polargame-props}.
\end{example}

\begin{example}[(minimax).]
\label{ex:globalforsaken}
In the particular case of constrained minimax problem we introduce the following polynomial game:
\begin{equation}
\label{eq:globalforsaken}
\tag{GlobalForsaken}
\minimize_{|x|\leq\nicefrac{4}{3}} \maximize_{|y|\leq\nicefrac{4}{3}} \phi(x,y):=xy+\psi(x)-\psi(y),
\end{equation}
where $\psi(z) = \frac{2 z^6}{21}-\frac{z^4}{3}+\frac{z^2}{3}$. 
We provide proof of the following properties in \cref{app:GlobalForsaken}: 
\begin{enumerate}
\item There exists a repellant limit cycle and an attracting limit cycle of $F$.
\item \rebuttal{$z^\star=(0,0)$} is a global Nash equilibrium for which \cref{ass:Minty:Struct} holds \rebuttal{inside the constraint} with $\rho > -\nicefrac{1}{2L}$, \rebuttal{where $L$ denotes the Lipschitz constant of $F$ restricted to the constraint set.}
\end{enumerate}
\end{example}

\section{Experiments}
\label{sec:experiments}

The algorithms considered in the experiments include the adaptive \cref{alg:WeakMinty:Struct}, \eqref{eq:CurvatureEG}, and constant stepsize methods that can be seen as instances of \eqref{eq:iter:constant} for various choices of $\gamma_k$ and $\bar{\alpha}_k$.
When $\gamma_k=\nicefrac 1L$ and $\bar{\alpha}_k=1$ we recover a constrained variant of extragradient, which we denote CEG. 
When $\bar{\alpha}_k=\nicefrac{1}{2}$ we denote the scheme CEG+, which is the direct generalization to the constraint setting of the \eqref{eq:eg+} scheme studied in \citet[Thm. 3.2]{diakonikolas2021efficient}.
Note that this choice of $\bar{\alpha}_k$ restricts the problem class for which we otherwise can have guaranteed convergence according to \Cref{cor:constant:Struct}.
When $\bar{\alpha}_k$ is chosen adaptively according to \Cref{alg:WeakMinty:Struct} we refer to it as AdaptiveEG+. Finally, when $\gamma_k$ is additionally chosen adaptively we use the name \eqref{eq:CurvatureEG}.

In the stochastic setting, when $\gamma_k=\nicefrac{1}{k}$ and $\alpha_k=1$, effectively both stepsizes diminish, and we recover a constrained variant of the popular stochastic extragradient scheme (see e.g. \citet[Algorithm 3]{hsieh2021limits}), which we refer to as SEG.
We also consider a heuristic variant where $\gamma_k=\nicefrac{1}{L}$ and only $\alpha_k$ is decreasing, which we refer to as SEG+.

We test the algorithms on the constructed examples and confirm their convergence guarantees.
Specifically, we apply the algorithms to the minimax problem in \cref{ex:globalforsaken}, the PolarGames in \cref{ex:polargame}, and a worst case construction, \cref{ex:lowerbound}, from the proof of the lower bound (cf. \Cref{sec:proofs:lowerbounds}).
For \cref{ex:lowerbound} we choose the problem parameters such that $\rho = -\nicefrac{1}{3L}$ according to \eqref{eq:lowerbound-worstcase}, and additionally add an $\ell_\infty$-ball constraint to keep the iterates bounded.
To simulate the stochastic setting we add Gaussian noise to calls of $F$.
Results for the deterministic setting and stochastic setting can be found in \cref{fig:det} and \cref{fig:stoc} respectively.

\begin{figure}[tb]
\centering
\begin{subfigure}{0.38\textwidth}
\caption{\label{fig:det-a}\Cref{ex:polargame} ($a=1$)}
\includegraphics[width=\textwidth]{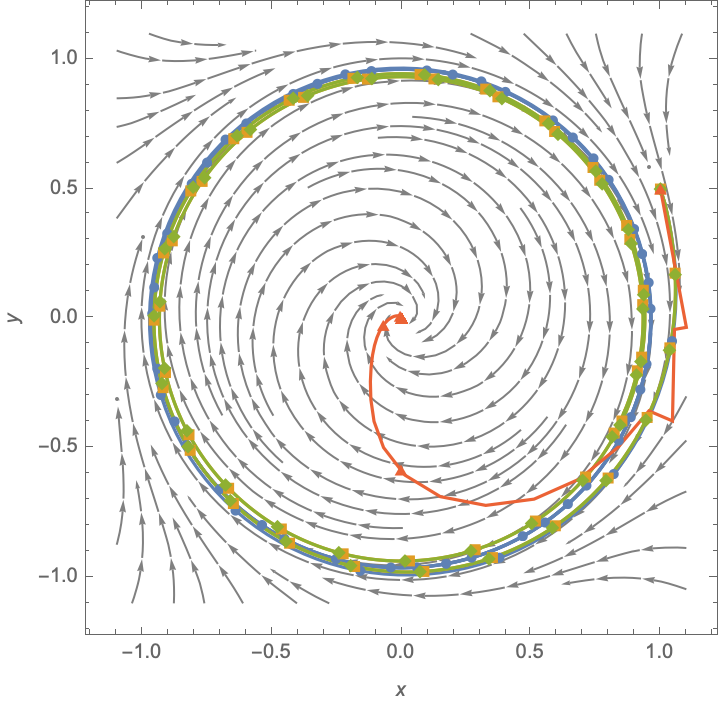}
\end{subfigure}\qquad
\begin{subfigure}{0.38\textwidth}
\caption{\label{fig:det-b}\Cref{ex:lowerbound} ($\rho = -\nicefrac{1}{3L}$)}
\includegraphics[width=\textwidth]{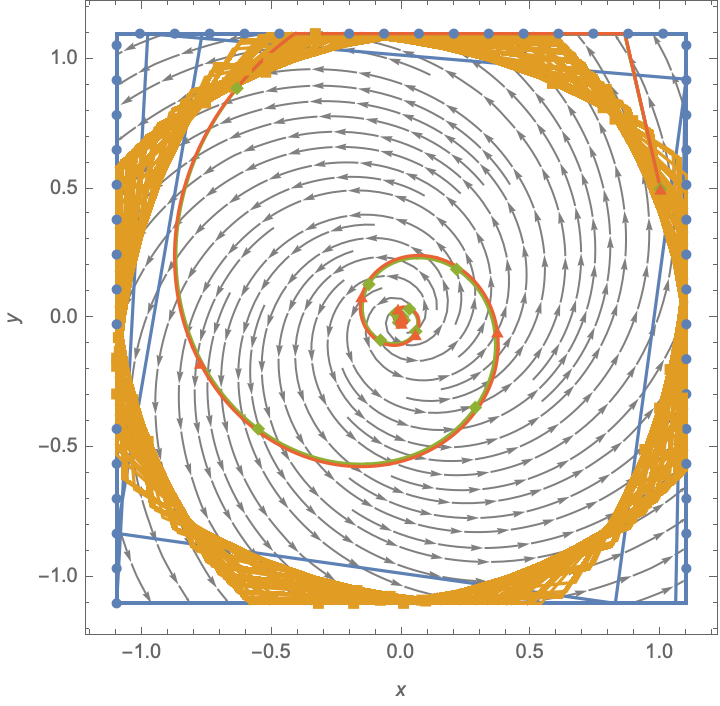}
\end{subfigure}
\includegraphics[width=0.4\textwidth]{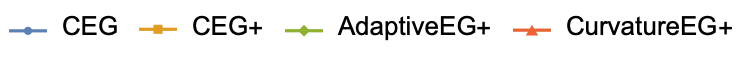}
\caption{
  Deterministic setting.
  In (\subref{fig:det-a}) we have an instance of \Cref{ex:polargame} with
  $\rho < -\nicefrac{1}{2L}$ for which \Cref{thm:lowerbound} provides lower bound for extrapolation stepsize $\gamma_k=\nicefrac{1}{L}$.
  However, adaptively choosing $\gamma_k$ larger can converge as illustrated with \eqref{eq:CurvatureEG}.
  In addition, (\subref{fig:det-b}) confirms with \Cref{ex:lowerbound}, that \eqref{eq:iter:constant} for $\bar{\alpha}_k=\nicefrac{1}{2}$ and CEG may indeed not converge even when $\rho = -\nicefrac{1}{3L}$.
  In contrast, both AdaptiveEG+ and \eqref{eq:CurvatureEG} converges to the stationary point. 
  Note that picking $\bar{\alpha}_k < \nicefrac{1}{3}$ would lead to convergence of \eqref{eq:iter:constant} by \Cref{cor:constant:Struct}.
  See \cref{fig:polargame} and \cref{fig:globalforsaken-cegplus} for supplementary experiments.
}
\label{fig:det}
\end{figure}

\begin{figure}[tb]
\centering
\begin{subfigure}{0.38\textwidth}
\caption{\label{fig:stoc-a}\Cref{ex:globalforsaken}}
\includegraphics[width=\textwidth]{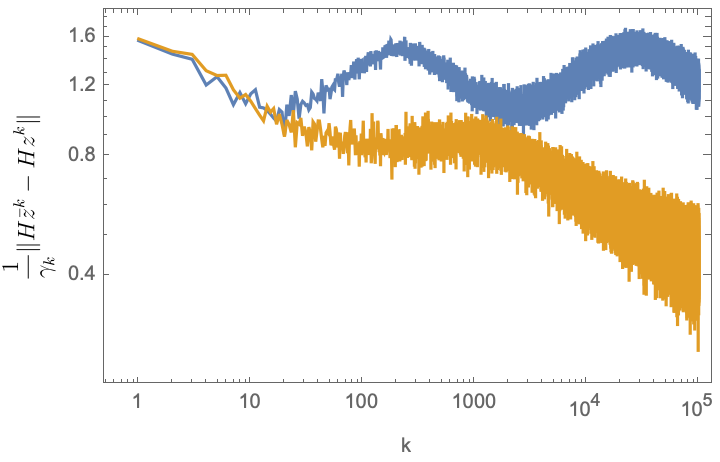}
\end{subfigure}\qquad
\begin{subfigure}{0.38\textwidth}
\caption{\label{fig:stoc-b}\Cref{ex:lowerbound}  ($\rho = -\nicefrac{1}{3L}$)}
\includegraphics[width=\textwidth]{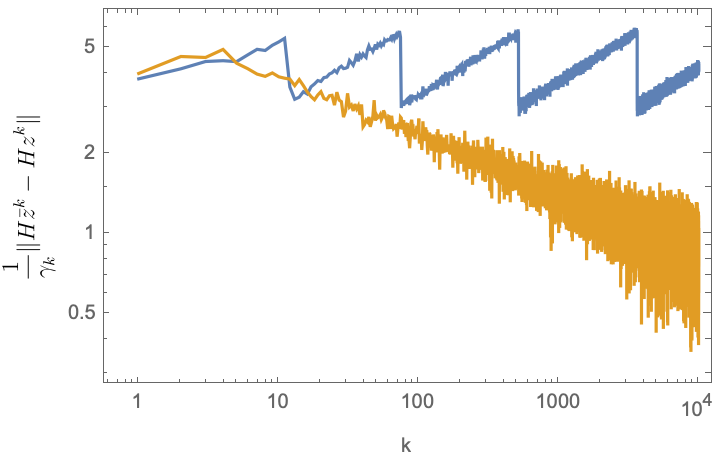}
\end{subfigure}
\includegraphics[width=0.4\textwidth]{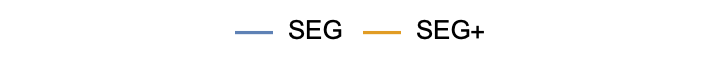}
\caption{
  Stochastic setting.
  In (\subref{fig:stoc-a}) we test the stochastic algorithms on our nonconvex-nonconcave constrained minimax example.
  The cycling behavior of SEG is inline with \citet{hsieh2021limits}, who shows that the sequence generated by SEG can converge to limit cycles of the underlying operator $F$.
  On the other hand, we observe that SEG+ escapes the attracting limit cycle.
  In (\subref{fig:stoc-b}) we also provide a more challenging example motivated by our lower bound.
}
\label{fig:stoc}
\end{figure}

\section{Conclusion}
\label{sec:conclusion}

This paper introduced an EG-type algorithm for a class of nonconvex-nonconcave minimax problems that satisfy the \emph{weak Minty variational inequality} (MVI). The range of parameter in the weak MVI was extended compared to EG+ of \cite{diakonikolas2021efficient}, and tightness of our results were demonstrated through construction of a counter example. In addition, EG+ \citep{diakonikolas2021efficient}, as well as the forward-backward-forward algorithm \citep{Tseng2000modified} were all shown to be special cases of our scheme. Furthermore, \eqref{eq:CurvatureEG} was proposed that performs a backtracking linesearch on the extrapolation stepsize $\gamma_k$
allowing for larger stepsizes and relaxes the condition $\rho>\tfrac{1}{2L}$ to $\rho >\nicefrac{-\gamma_k}{2}$ which is often a much weaker condition. More importantly, it is shown that asymptotically the linesearch always passes with $\gamma_k= \nu\|\jac F{z^k}\|^{-1}$ for any $\nu\in(0,1)$, thus ratifying the name \eqref{eq:CurvatureEG}. %
 Future direction include exploring applications of the proposed algorithm in particular in the setting of GANs. It is also interesting to develope a variance reduced variant of the algorithm for finite sum minimax problems.

\section{Acknowledgments and disclosure of funding}
\label{sec:acknowledgement}
We would like to especially thank Yu-Guan Hsieh for providing valuable feedback and discussion.
This project has received funding from the European Research Council (ERC) under the European Union's Horizon 2020 research and innovation programme (grant agreement n° 725594 - time-data).
This work was supported by the Swiss National Science Foundation (SNSF) under  grant number 200021\_205011.
The work of the second and third author was supported by the Research
Foundation Flanders (FWO)
postdoctoral grant 12Y7622N and research projects G081222N, G0A0920N,
G086518N,
and G086318N; Research Council KU Leuven C1 project No. C14/18/068;
Fonds de la Recherche Scientifique -- FNRS and the Fonds
Wetenschappelijk Onderzoek -- Vlaanderen under EOS project no 30468160
(SeLMA); European Union's Horizon 2020 research and innovation
programme under the Marie Skłodowska-Curie grant agreement No. 953348.
The work of Olivier Fercoq was supported by the Agence National de la Recherche grant ANR-20-CE40-0027, Optimal Primal-Dual Algorithms (APDO).

\bibliography{TeX/WMVI_ICLR2022,TeX/Bibliography}
\bibliographystyle{iclr2022_conference.bst}

\newpage
\appendix

\section{Preliminary definitions}\label{sec:auxiliary}
    
Notationally we will use $\lfloor x\rfloor_+\coloneqq \max\{0,x\}$ throughout.
We additionally recall some standard definitions and results and refer to  
\cite{Bauschke2017Convex,Rockafellar1970Convex}) for further details.

An operator or set-valued mapping $A:\R^n\rightrightarrows\R^d$ maps each point $x\in\R^n$ to a subset $Ax$ of $\R^d$. We will use the notation $A(x)$ and $Ax$ interchangably. 
 We denote the domain of $A$ by $$\dom A\coloneqq\{x\in\R^n\mid Ax\neq\emptyset\},$$
its graph by $$\graph A\coloneqq\{(x,y)\in\R^n\times \R^d\mid y\in Ax\},$$ and 
the set of its zeros by $\zer A\coloneqq\{x\in\R^n \mid 0\in Ax\}$. 
The inverse of $A$ is defined through its graph: $\graph A^{-1}\coloneqq\{(y,x)\mid (x,y)\in\graph A\}$.
The \emph{resolvent} of $A$ is defined by $J_A\coloneqq(\id+A)^{-1}$, where $\id$ denotes the identity operator.  

\begin{defin}[(co)monotonicity \cite{bauschke2020generalized}] An Operator $A:\R^n\rightrightarrows\R^n$ is said to be $\rho$-monotone for some $\rho\in \R$, if for all $(x,y),(x^\prime,y^\prime)\in\graph A$
 \[
 \rho\|x-x^\prime\|^2\leq \langle x-x^\prime,y-y^\prime\rangle, 
 \]
 and it is said to be $\rho$-comonotone if for all $(x,y),(x^\prime,y^\prime)\in\graph A$
\[
 \rho\|y-y^\prime\|^2\leq \langle x-x^\prime,y-y^\prime\rangle.
 \]
The operator $A$ is said to be maximally (co)monotone if its graph is not strictly contained in the graph of another (co)monotone operator.
\end{defin}
We say that $A$ is monotone if it is $0$-monotone. When $\rho<0$, $\rho$-comonotonicity is also referred to as $|\rho|$-cohypomonotonicity.
\begin{defin}[Lipschitz continuity and cocoercivity]
	Let $\mathcal D\subseteq \R^n$ be a nonempty subset of $\R^n$. A single-valued operator $A:\mathcal D\to \R^n$ is said to be $L$-Lipschitz continuous if for any $x,x^\prime\in \mathcal D$
	\[
	\|Ax - Ax^\prime\| \leq L\|x-x^\prime\|,
	\]
	and $\beta$-cocoercive if 
	\[
		\beta\|Ax - Ax^\prime\|^2 \leq \langle x-x^\prime, Ax - Ax^\prime \rangle. 
	\]
	Moreover, $A$ is said to be nonexpansive if it is $1$-Lipschitz continuous, and firmly nonexpansive if 
	it is $1$-cocoercive. 
\end{defin}
  The resolvent operator  $J_A$ is firmly nonexpansive (with $\dom J_A= \R^n$) if and only if $A$ is (maximally) monotone. 

The following lemma plays an important role in our convergence analysis. 
\begin{lem}\label{lem:H}
	Let  $A:\R^n\to \R^n$ denote a single valued operator. Then, 
	\begin{enumerate}
		\item\label{lem:H:coco} $A$ is $1$-Lipschitz if and only if $T=\id - A$ is $\nicefrac12$-cocoercive. 
		\item \label{lem:H:SM} If $A$ is $L$-Lipschitz, then $T=\id - \eta A$, $\eta\in(0,\nicefrac1{L})$, is $(1-\eta L)$-monotone, and 
		in particular 
		\(
		\|Tu-Tv\| \geq (1-\eta L)\|u-v\|
		\) for all $u,v\in\R^n$. 
	\end{enumerate}
\begin{proof}
	The first claim follows directly from \cite[Prop.4.11]{Bauschke2017Convex}. That $T$ is strongly monotone is a consequence of the Cauchy Schwarz inequality and Lipschitz continuity of $A$: 
	\begin{align*}
		\langle Tv - Tu, v-u \rangle  
			{}={} &
		\|v-u\|^2 - \eta\langle Av - Au, v-u\rangle  
			{}\geq{} 
		(1-\eta L)\|v-u\|^2.
	\end{align*}
	In turn, the last claim follows from the Cauchy-Schwarz inequality.
\end{proof}
\end{lem}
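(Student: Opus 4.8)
The plan is to handle the two parts by direct inner-product manipulation, since both reduce to elementary inequalities once the relevant quantities are expanded.

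For \cref{lem:H:coco}, I would prove the equivalence through a single algebraic computation that is reversible at every step. Fixing $x,x'\in\R^n$, write $u = x - x'$ and $w = Ax - Ax'$, so that $Tx - Tx' = u - w$. The $\nicefrac12$-cocoercivity inequality $\tfrac12\|Tx-Tx'\|^2 \le \langle x - x', Tx - Tx'\rangle$ then reads $\tfrac12\|u-w\|^2 \le \langle u, u - w\rangle$. Expanding both sides, the cross terms $\langle u,w\rangle$ cancel and the inequality collapses exactly to $\|w\|^2 \le \|u\|^2$, i.e. $\|Ax - Ax'\| \le \|x - x'\|$. Since every manipulation is either an identity or an equivalence, this one computation simultaneously establishes both implications and yields the claimed ``if and only if''. (Alternatively one may simply invoke \cite[Prop.~4.11]{Bauschke2017Convex}, as the statement notes.)

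For \cref{lem:H:SM}, I would expand the inner product defining monotonicity of $T = \id - \eta A$. For $u,v\in\R^n$,
\[
\langle Tv - Tu, v - u\rangle = \|v - u\|^2 - \eta\langle Av - Au, v - u\rangle .
\]
Bounding the cross term by Cauchy--Schwarz together with the $L$-Lipschitz continuity of $A$ gives $\langle Av - Au, v - u\rangle \le \|Av - Au\|\,\|v-u\| \le L\|v-u\|^2$, whence $\langle Tv - Tu, v - u\rangle \ge (1 - \eta L)\|v - u\|^2$, which is precisely $(1-\eta L)$-monotonicity; note $1 - \eta L > 0$ because $\eta < \nicefrac1L$. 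For the ``in particular'' norm bound, I would apply Cauchy--Schwarz once more, this time to the left-hand side of the monotonicity estimate: $(1-\eta L)\|u-v\|^2 \le \langle Tu - Tv, u - v\rangle \le \|Tu - Tv\|\,\|u - v\|$. Dividing by $\|u-v\|$ (the case $u = v$ being trivial) delivers $\|Tu - Tv\| \ge (1 - \eta L)\|u - v\|$.

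There is no genuine obstacle here; the arguments are short and self-contained. The only points requiring a line of care are verifying that the chain of manipulations in \cref{lem:H:coco} is honestly reversible—so that the single computation furnishes both directions of the equivalence—and separating out the degenerate case $u = v$ when passing from the strong-monotonicity estimate to the norm lower bound in \cref{lem:H:SM}.
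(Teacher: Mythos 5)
Your proposal is correct and follows essentially the same route as the paper: for \cref{lem:H:SM} you perform the identical expansion with Cauchy--Schwarz and Lipschitz continuity, followed by a second application of Cauchy--Schwarz for the norm bound (you are slightly more careful in isolating the trivial case $u=v$). The only difference is cosmetic: for \cref{lem:H:coco} you inline the standard reversible expansion—whose cross-term cancellation reduces $\nicefrac12$-cocoercivity of $\id - A$ exactly to $\|Ax-Ax'\|\le\|x-x'\|$—where the paper simply cites \cite[Prop.~4.11]{Bauschke2017Convex}, an alternative you yourself acknowledge.
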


\section{Proofs and further results}
\subsection{Proofs of \texorpdfstring{\Cref{sec:Struct}}{\S\ref*{sec:Struct}}}\label{sec:proofs:Struct}

\begin{appendixproof}{thm:main:Struct}
	Let $H=\id -\gamma_k F$. 
 By \Cref{state:barz:Struct} 
\(
Hz^k  \in \z^k + \gamma_k A\z^k
\). Therefore,
\begin{equation}\label{eq:1}
	\tfrac{1}{ \gamma_k}(Hz^k - H\z^k) %
		{}\in{}
    A\z^k + F\z^k %
\end{equation}
In what follows we will show that \Cref{alg:WeakMinty:Struct} is equivalent to taking a forward-backward step followed by a correction step. Consider the updates
	\begin{align*}
	\z^k	 
		 {}\coloneqq{} &  
	\left(\id + \gamma_k A\right)^{-1}	 
	\big(z^k -  \gamma_k  Fz^k\big), 
	\numberthis\label{eq:barz}\\
	 z^{k+1} 
	 	 {}={} &
  	 (1-\lambda_k)z^k + \lambda_k\proj_{\D_k}(z^k), \quad \text{where}\quad  \D_k\coloneqq \set{w \mid  \langle  Hz^k - H\z^k, \z^k - w \rangle  \geq \tfrac{\delta_k}{\gamma_k}\|Hz^k - H\z^k\|^2}. %
\end{align*}
Note that 
\begin{align*}
	 \langle H\z^k - Hz^k, \z^k - z^k \rangle + \tfrac{\delta_k}{\gamma_k} \|H\z^k - Hz^k\|^2  
		{} \geq {} &
	(\tfrac{1}{2} + \tfrac{\delta_k}{\gamma_k}) \|H\z^k - Hz^k\|^2 
\numberthis\label{eq:posAlpha}
\end{align*}
where in the inequality \cref{lem:H:coco} was used. Hence, by \eqref{eq:posAlpha} the stepsize $\alpha_k$ is positive and bounded away from zero. Moreover, if $z^k\in \D_k$, then from \eqref{eq:posAlpha} we may conclude that $\|H\z^k -Hz^k\|\leq 0$ which implies that the generated sequence remains constant and $\z^k\in \zer T$ (cf. \eqref{eq:1}). 

The projection onto $\D_k$ for any $v\notin \D_k$ is given by 
\begin{equation*} %
\proj_{\D_k}(v)  = v + \frac{\langle \z^k - v, Hz^k - H\z^k\rangle -\tfrac{\delta_k}{\gamma_k}\|Hz^k - H\z^k\|^2}{\|Hz^k - H\z^k\|^2}(Hz^k - H\z^k)	
\end{equation*}
Moreover, \eqref{eq:1} together with \cref{ass:Minty:Struct} at $\bar z^k$ yields 
\begin{equation}\label{eq:mintyH}
	 \tfrac{1}{ \gamma_k}\langle  Hz^k - H\z^k, \z^k - z^\star \rangle  \geq \tfrac{\rho}{ \gamma_k^2}\|Hz^k - H\z^k\|^2\geq \tfrac{\delta_k}{ \gamma_k^2}\|Hz^k - H\z^k\|^2,  
\end{equation}
thus ensuring $z^\star\in \mathcal S^\star \subseteq \D_k$. The projection onto $\D_k$ is then given by 
\(
	\proj_{\D_k}(z^k) = z^k + \alpha_k (H\z^k-Hz^k),   
\)
where $\alpha_k$ is as in \cref{State:alpha:Struct}. %

Finally, since the projection $\proj_{\D_k}$ is firmly nonexpansive, it follows from \cite[Cor. 4.41]{Bauschke2017Convex} that the mapping $(1-\lambda_k)\id + \lambda_k \proj_{\D_k}$ is $\nicefrac{\lambda_k}2$-averaged. Consequently, %
we may conclude that $\seq{z^k}$ is Fej\'er monotone relative to $ \mathcal S^\star$ \cite[Prop. 4.35(iii)]{Bauschke2017Convex}. That is for all $z^\star\in  \mathcal S^\star$  
\begin{align*}
	\|z^{k+1} - z^\star\|^2 
		{}\leq{} &
	\|z^k - z^\star\|^2 - \lambda_k{(2-\lambda_k)}\alpha_k^2\|H\z^k-Hz^k\|^2.
	\\%
\dueto{\eqref{eq:posAlpha}}		{}\leq{} &
	\|z^{k}-z^{\star}\|^{2}-\tfrac{\varepsilon_k}{\gamma_{k}^2}\|H\z^k-Hz^k\|^{2}, \numberthis \label{eq:Fejer}
\end{align*}
where $\varepsilon_k \coloneqq \lambda_{k}(2-\lambda_{k})(\tfrac{\gamma_{k}}{2}+\delta_k)^{2}$. 
The convergence rate in \eqref{eq:sublin:main} is obtained by telescoping \eqref{eq:Fejer}.
Since $\liminf_{k\to \infty}\varepsilon_k>0$, $\seq{\tfrac{1}{\gamma_{k}^2}\|H\z^{k}-Hz^k\|^{2}}$ converges to zero. 
Moreover, $\seq{\|z^k-z^\star\|^2}$ converges and the sequence $\seq{z^k}$ is bounded. 
Since $\gamma_k$ is bounded, and $F$ and the resolvents $(\id + \gamma_k A)^{-1}$ are Lipschitz continuous (cf. \cite[Cor. 23.9]{Bauschke2017Convex}), so is their composition. 
 Hence, $\seq{\z^k}$ is also bounded. Let $\seq{\bar z^k}[k\in K]$ be a subsequence converging to some  $\z\in\R^n$.
  Combined with the fact that $\seq{\tfrac{1}{\gamma_{k}^2}\|H\z^{k}-Hz^k\|^{2}}$ converges to zero, we may conclude from \eqref{eq:1} along with \cite[Prop. 20.38]{Bauschke2017Convex} and Lipschitz continuity of $F$ that $\z\in \zer T$. Finally, if in addition $\gamma = \limsup_{k\to \infty} \gamma_k <1/L$, then $(1-\gamma L)\|\z^k - z^k\|\leq \|H\z^k-Hz^k\|$ (invoke \cref{lem:H:SM}). Therefore, $\seq{\|\z^k - z^k\|}$ converges to zero, which in turn implies that a subsequence $\seq{z^k}[k\in K']$ converges to a point $z'$ iff so does the subsequence $\seq{\z^k}[k\in K']$. Hence, $\seq{z^k}[k\in K]$ also converges to $\z \in \zer T$.  Consequently, if \cref{ass:Minty:Struct} holds at all of the zeros of $T$, \ie, if $\mathcal S^\star = \zer T$, then the second claim follows by invoking \cite[Thm. 5.5]{Bauschke2017Convex}. 
\end{appendixproof}

\begin{appendixproof}{cor:constant:Struct}
The proof of convergence was already given prior to the statement of the corollary. It remains to derive \eqref{eq:sublinrate}. By \cref{ass:Minty:Struct} and owing to $\nicefrac12$-cocoercivity of $H$ (cf. \cref{lem:H:coco})
\begin{align*}
	\langle z^{k}-  z^{\star},H {\z^k} - H {z^k}\rangle
		{}={}&
	\langle \bar{z}^{k}- z^{\star},H{\z^k} - H{z^k}\rangle
		{}+{}
	\langle z^{k}- {\z^k},H{\z^k} - H{z^k}\rangle
	\\%
	\dueto{\eqref{eq:mintyH}}
		{}\leq{}& 
	-(\tfrac{1}{2} + \tfrac{\delta_k}{\gamma_k})  \|H\z^k - Hz^k \|^2. 		
	\numberthis\label{eq:innprodlower}
\end{align*}

Therefore, provided that $\bar\alpha>0$ we have 
\begin{align*}
	\|z^{k+1} - z^\star\|^2 
		{}={} &
	\|z^k - z^\star\|^2 + \bar\alpha^2\|H\z^k-Hz^k\|^2 + 2\bar\alpha \langle z^k - z^\star, H\z^k-Hz^k \rangle
	\\%
\dueto{\eqref{eq:innprodlower}}
		{}\leq{} &
	\|z^{k}-z^{\star}\|^{2}-\bar\alpha(2(\tfrac{1}{2} + \tfrac{\delta}{\gamma}) - \bar\alpha)\|H\z^k-Hz^k\|^{2}.
\end{align*}
Telescoping the above inequality yields the claimed inequality. 
\end{appendixproof}

\subsection{Convergence results and proofs of \texorpdfstring{\Cref{sec:curvature}}{\S\ref*{sec:curvature}}}\label{sec:proof:curvature}

The convergence results for \eqref{eq:CurvatureEG} are provided in the next corollary where $\rho$ in \cref{ass:Minty:Struct} is allowed to take potentially larger values provided that $\rho>-\nicefrac{\gamma_k}2$. 
Note that owing to the lower bound on $\gamma_k$ (cf. \cref{lem:LS:lb}), the weak MVI assumption in the corollary is always satisfied if $\rho\in(-\nicefrac{\nu\tau}{2L},\infty)$, however, in practice $\gamma_k$ may take larger values.

\begin{cor}\label{cor:curvatureEG}
Suppose that \cref{ass:A:Struct,ass:F:Struct} hold, and consider the sequences $\seq{z^k}$, $\seq{\z^k}$ generated by \eqref{eq:CurvatureEG}. Suppose that \cref{ass:Minty:Struct} holds for some $\rho\in\R$ satisfying $\gamma_k+2\rho>0$, and let  ${\delta_k\in(\nicefrac{-\gamma_k}2, \rho]}$, $\lambda_k \in (0,2)$,
 $\liminf_{k\to \infty} \lambda_k(2-\lambda_k)>0$, and $\liminf_{k\to \infty} (\delta_k + \nicefrac{\gamma_k}2)>0$. Then, 
\begin{enumerate}
    \item \label{thm:Curv:Struct:vanish} The sequence $\seq{\|\z^k-z^k\|^2}$ vanishes;
    \item \label{thm:Curv:Struct:limitpoint} $\seq{\z^k}$, $\seq{z^k}$ are bounded, and have the same limit points belonging to $\zer T$; 
    \item  \label{thm:Curv:Struct:conv} if in addition $\mathcal S^\star = \zer T$, then $\seq{z^k}$, $\seq{\z^k}$ both converge to some  $z^\star\in \zer T$. 
\end{enumerate}
Moreover, if $z^k, \z^k\to z^\star\in \zer T$ (as is the case in \ref{thm:Curv:Struct:conv}), and $F$ is continuously differentiable, then 
 eventually the backtrack will never be invoked. %

\begin{proof}
 Observe that in the proof of \cref{thm:main:Struct} $1$-Lipschitz continuity of $\gamma_k F$ is only used at the generated points $\z^k$ and $z^k$ (see \eqref{eq:posAlpha}), and is thus ensured by the linesearch \cref{alg:LSBacktrack}. 
Therefore, it is easy to see that $\alpha_k$ is positive and bounded away from zero provided that $\rho> -\nicefrac{\gamma_k}2$ , see \eqref{eq:posAlpha}. 
Moreover, since $\gamma_k\|F\z^k - Fz^k\|\leq \nu\|\z^k - z^k\|$, arguing as in \cref{lem:H:SM} we obtain $\|H\z^k-Hz^k\|\geq (1-\nu)\|\z^k-z^k\|$. Hence, it follows from \eqref{eq:Fejer} that 
\begin{align*}
    \|z^{k+1} - z^\star\|^2 
        {}\leq{} &
    \|z^{k}-z^{\star}\|^{2}-\tfrac{\varepsilon_k(1-\nu)}{\gamma_{k}^2}\|\z^k-z^k\|^{2}, 
\end{align*}
By telescoping the inequality and noting that $\gamma_k$ is bounded, we obtain $\sum_{k\in \N}\|\z^k - z^k\|^2<\infty$, implying \ref{thm:Curv:Struct:vanish}. Noting this and arguing as in the last part of the proof of  \cref{thm:main:Struct} establishes \ref{thm:Curv:Struct:limitpoint}, \ref{thm:Curv:Struct:conv}. The last claim is the direct consequence of \cref{lem:LS:acc}. 
\end{proof}
\end{cor}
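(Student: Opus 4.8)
The plan is to show that \eqref{eq:CurvatureEG} is, iteration by iteration, an instance of \Cref{alg:WeakMinty:Struct} in which the \emph{global} requirement $\gamma_k L\le 1$ has been replaced by the \emph{local} inequality enforced by the linesearch, namely $\gamma_k\|F\z^k-Fz^k\|\le \nu\|\z^k-z^k\|$ with $\nu\in(0,1)$. Accordingly, the whole argument behind \Cref{thm:main:Struct} can be reused once I verify that its two uses of Lipschitz continuity survive in this localized form. First I would invoke \cref{lem:LS:lb} to guarantee that the linesearch terminates in finitely many steps, so that $\gamma_k$ is well defined and bounded below by $\min\{\gamma^{\rm init},\nicefrac{\nu\tau}{L}\}>0$; this in particular makes the stepsize $\alpha_k$ and the update \cref{state:Finito:s+:Struct} meaningful.

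The core of the proof is to re-derive the two estimates on which \Cref{thm:main:Struct} rests. On the one hand, \eqref{eq:posAlpha} uses only that $H=\id-\gamma_k F$ is $\nicefrac12$-cocoercive \emph{at the pair} $(\z^k,z^k)$; the pairwise form of \cref{lem:H:coco} shows this is equivalent to $\gamma_k\|F\z^k-Fz^k\|\le\|\z^k-z^k\|$, which the linesearch delivers (indeed with the strict factor $\nu$). Combined with the hypothesis $\delta_k\in(\nicefrac{-\gamma_k}2,\rho]$ — nonempty precisely because $\rho>-\nicefrac{\gamma_k}2$ — this keeps $\alpha_k$ positive and bounded away from zero exactly as before. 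On the other hand, in place of $\limsup_k\gamma_k<\nicefrac1L$ I would argue as in \cref{lem:H:SM}, but at the single pair $(\z^k,z^k)$, to obtain the reverse bound $\|H\z^k-Hz^k\|\ge(1-\nu)\|\z^k-z^k\|$ directly from the linesearch criterion. These two facts let me carry \eqref{eq:Fejer} over verbatim and then lower-bound $\|H\z^k-Hz^k\|^2$, yielding
\[
\|z^{k+1}-z^\star\|^2\le\|z^k-z^\star\|^2-\tfrac{\varepsilon_k(1-\nu)}{\gamma_k^2}\|\z^k-z^k\|^2 ,
\]
with $\varepsilon_k$ as in the proof of \Cref{thm:main:Struct}.

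From here the three claims follow by the same route as in \Cref{thm:main:Struct}. Telescoping the displayed inequality and using $\liminf_k\varepsilon_k>0$ together with boundedness of $\seq{\gamma_k}$ gives $\sum_k\|\z^k-z^k\|^2<\infty$, hence \cref{thm:Curv:Struct:vanish}. Fejér monotonicity of $\seq{z^k}$ relative to $\mathcal S^\star$ yields boundedness of $\seq{z^k}$, and since $\|\z^k-z^k\|\to0$ the sequence $\seq{\z^k}$ is bounded with the same limit points; passing to the limit in \eqref{eq:1} via Lipschitz continuity of $F$ and closedness of the graph of the maximally monotone $A$ shows these limit points lie in $\zer T$, giving \cref{thm:Curv:Struct:limitpoint}. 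Under $\mathcal S^\star=\zer T$ the standard Fejér convergence theorem upgrades this to full convergence of both sequences, which is \cref{thm:Curv:Struct:conv}. The final assertion is immediate: once $z^k,\z^k\to z^\star\in\zer T$ and $F\in C^1$, \cref{lem:LS:acc} applies and the backtrack is eventually never triggered.

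The main obstacle I anticipate is the careful localization: the proof of \Cref{thm:main:Struct} invokes global Lipschitzness, and I must confirm that each such use is confined to the generated pair $(\z^k,z^k)$ and is covered by the linesearch guarantee — in particular re-deriving $\|H\z^k-Hz^k\|\ge(1-\nu)\|\z^k-z^k\|$ pairwise rather than from a global modulus. A secondary point needing care is the uniform upper bound on $\gamma_k$ required to pass from $\sum_k\tfrac1{\gamma_k^2}\|\z^k-z^k\|^2<\infty$ to $\sum_k\|\z^k-z^k\|^2<\infty$; this I would obtain from the Fejér-boundedness of the iterates, which confines $\gamma^{\rm init}=\nu\|\jac F{z^k}\|^{-1}$ to a bounded range.
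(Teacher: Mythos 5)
Your proposal is correct and takes essentially the same route as the paper's own proof: you localize the $\nicefrac12$-cocoercivity of $H$ at the generated pair $(\z^k,z^k)$ via the linesearch so that \eqref{eq:posAlpha} survives, re-derive $\|H\z^k-Hz^k\|\geq(1-\nu)\|\z^k-z^k\|$ pairwise as in \cref{lem:H:SM}, obtain the identical Fej\'er inequality, telescope it to get \ref{thm:Curv:Struct:vanish}, reuse the tail of the proof of \cref{thm:main:Struct} for \ref{thm:Curv:Struct:limitpoint} and \ref{thm:Curv:Struct:conv}, and invoke \cref{lem:LS:acc} for the final claim. One small caveat: your side remark that Fej\'er-boundedness of the iterates upper-bounds $\gamma^{\rm init}=\nu\|JF(z^k)\|^{-1}$ is inverted (boundedness of $\seq{z^k}$ only upper-bounds $\|JF(z^k)\|$ by $L$, hence lower-bounds $\gamma^{\rm init}$), but since the paper's proof simply asserts that $\gamma_k$ is bounded without justification, this does not distinguish your argument from theirs.
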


\begin{appendixproof}{lem:LS}
	\ref{lem:LS:lb}:	
		Since $F$ is $L$-Lipschitz continuous the linesearch would terminate in finite steps. Either $\gamma^{\rm init}$ satisfies the condition, or else the backtrack procedure is invoked, which in turn implies the previous candidate $\gamma/\tau$ should have violated the condition leading the the claimed lower bound.  

	\ref{lem:LS:acc}: Since the resolvent $(\id + \gamma A)^{-1}$ and $F$ are Lipschitz continous, so is their composition. Hence, $\G_\gamma(z^k) \to \G_\gamma(z^\star)$. Furthermore, by definition 
	\(
	z^\star - \gamma Fz^\star \in \G_\gamma(z^\star) + \gamma A(\G_\gamma(z^\star))
	\). 
	Consequently, using monotonicity of $A$ at $\G_\gamma(z^\star)$ and $z^\star$, and that $-Fz^\star\in Az^\star$ yields 
	\(
	0 
		{}\leq{}
	\langle z^\star - \gamma Fz^\star -\G_\gamma(z^\star) + Fz^\star, \G_\gamma(z^\star) - z^\star\rangle
			{}={}
	- \|z^\star -\G_\gamma(z^\star)\|^2
	\). Thus $\G_\gamma(z^\star)=z^\star$. Using the fact that both \(\seq{\G_\gamma(z^k)}\) and  \(\seq{z^k}\) converges to $z^\star\in \zer T$:
	\[
	\lim_{k\to \infty} \frac{\|F(\G_\gamma(z^k)) - Fz^k\|}{\|\G_\gamma(z^k)-z^k\|} 
		{}\leq{}
	\limsup_{z,z^\prime \to z^\star} \frac{\|Fz^{\prime} - Fz\|}{\|z^\prime - z\|} 
		{}={} 
	\lip F(z^\star)
		{}={} 
	\|\jac F{z^\star}\|,
	\]
	where \cite[Thm. 9.7]{rockafellar2009variational} was used. The claim follows from continuity of $\jac F{}$ and the fact that $\seq{z^k}$ converges to $z^\star$. 
\end{appendixproof}

\subsection{Proofs of \texorpdfstring{\Cref{sec:lowerbounds}}{\S\ref*{sec:lowerbounds}}}\label{sec:proofs:lowerbounds}
    
To prove the lower bound we introduce the following unconstrained bilinear minimax problem with an unstable critical point.

\begin{example}
\label{ex:lowerbound}
Consider the following minimax problem:
\begin{equation}
\label{eq:lowerbound-example}
\minimize_{x \in \mathbb R} \maximize_{y \in \mathbb R} f(x,y) := a x y+\frac{b}{2}(x^2-y^2),
\end{equation}
where $b<0$ and $a>0$.
\end{example}

\begin{appendixproof}{thm:lowerbound}
The associated operator of \cref{ex:lowerbound} can easily be computed,
\begin{equation}
Fz=\left( a y+b x, b y-a x
\right),
\end{equation}
where $z=(x,y)$.
In this particular case, both $L$ and $\rho$ turn out to be constants.
By simple calculation we have,
\begin{equation}
\|JF(z)\| = \sqrt{a^2 + b^2}, \quad 
\rho = \frac{b}{a^2+b^2}
\end{equation}
where $\|\cdot\|$ is the spectral norm.
Since the norm of the Jacobian is constant it equates the global Lipschitz constant, $L=\|JF(z)\|$.

By linearity of $F$, one step of \eqref{eq:eg+} is conveniently also a linear operator.
Specifically,
\begin{equation}
\label{eq:bilinear-operator}
z^{k+1}= Tz^{k} \quad \text{with} \quad T:=\left(
\begin{array}{cc}
 \frac{(1-\bar{\alpha} ) a^2+b \left(-\bar{\alpha}  \sqrt{a^2+b^2}+\bar{\alpha}  b+b\right)}{a^2+b^2} & -\frac{a \bar{\alpha}  \left(\sqrt{a^2+b^2}-2 b\right)}{a^2+b^2} \\
 \frac{a \bar{\alpha}  \left(\sqrt{a^2+b^2}-2 b\right)}{a^2+b^2} & \frac{(1-\bar{\alpha} ) a^2+b \left(-\bar{\alpha}  \sqrt{a^2+b^2}+\bar{\alpha}  b+b\right)}{a^2+b^2} \\
\end{array}
\right).
\end{equation}
We know that a linear dynamical system is globally asymptotically stable if and only if the spectral radius of the linear mapping is strictly less than 1. %

Let $\lambda_1,\lambda_2$ be the eigenvalues of $T$.
Then the spectral radius is the largest absolute value of the eigenvalues.
For $T$ this becomes,
\begin{equation}
\max_{i\in\{1,2\}} |\lambda_i| = \sqrt{\frac{(2 (\bar{\alpha} -1) \bar{\alpha} +1) a^2-2 \bar{\alpha}  (\bar{\alpha} +1) b \left(\sqrt{a^2+b^2}-b\right)+b^2}{a^2+b^2}}.
\end{equation}
So we can ask what $c$ in $\rho = -\frac{c}{L}$ needs to be for the sequence $(z^k)_{k\in \mathbb N}$ to converge.
Solving for $c$ in this equality with $\max_i |\lambda_i| < 1$, we obtain,
\begin{equation}
\label{eq:lowerbound-convergence}
c<\frac{1-\bar{\alpha} }{2},
\end{equation}
provided that we pick
\begin{equation}
\label{eq:lowerbound-worstcase}
\frac{\sqrt{1-c^2}}{c}=-\frac{a}{b}.
\end{equation}
Equation \eqref{eq:lowerbound-worstcase} provides a specification for \Cref{ex:lowerbound}.
As long as \eqref{eq:lowerbound-convergence} is satisfied, \eqref{eq:eg+} is guaranteed to converge for $\gamma_k=\nicefrac{1}{L}$.
On the other hand, since \eqref{eq:bilinear-operator} is a linear system, we simultaneously learn that picking $c$ any larger would imply non-convergence through $\max_i |\lambda_i| \geq 1$ (given $z^0 \neq 0$).
We can trivially embed problem \eqref{eq:bilinear-operator} into a higher dimension to generalize the result.
Noting that $c=-\rho L$ completes the proof.
\end{appendixproof}
We provide Mathematica code to verify each step of the above proof.\footnote{\label{footnote:code}The supplementary code can be found at \url{https://github.com/LIONS-EPFL/weak-minty-code/}.}

\section{Toy examples}\label{app:toy-examples}
    \rebuttal{In the following appendix, $L$ denotes the Lipschitz constant of $F$ restricted to the constraint set and $\rho$ is the parameter of the weak MVI (\Cref{ass:Minty:Struct}) when restricted to the constraint set.
This restriction of the definitions is warranted, since $z^k$ remains within the constraint set in all simulations, while $\bar{z}^k$ is guaranteed to stay within by definition of \Cref{state:barz:Struct} in \Cref{alg:WeakMinty:Struct} (and likewise for all other considered method treating problem \eqref{eq:StrucIncl}).}

All computer-assisted calculations can be found in the supplementary code.\textsuperscript{\ref{footnote:code}}

\subsection{Constructing a PolarGame (\texorpdfstring{\Cref{def:polargame}}{\S\ref*{def:polargame}})}\label{app:polargame}

Recall \Cref{def:polargame} which considers a vectorfield $F: \mathbb{R}^n \rightarrow \mathbb{R}^n$ with limit cycles at $r \in \set{c_1,...,c_k}$  where $c_i\neq 0$ for all $i \in [k]$.
Such a vectorfield can be constructed for $n=2$ by departing from the following dynamics in polar coordinates,
\begin{equation}
\label{eq:polar-flow}
\begin{split}
\frac{\partial r}{\partial t} &= -a \cdot r(t) \prod_{i=1}^k (r(t) + c_i) \cdot (r(t) - c_i) \\
\frac{\partial \theta}{\partial t} &= -b \cdot r(t),
\end{split}
\end{equation}
with $a,b \neq 0$.
Transforming this dynamics into cartesian coordinates yields the desired vectorfield, $F$, while subsequently integrating with respect to $x$ and $y$ yields the two potentials associated with the two players.
Note that the roots $\set{-c_i}_{i=1}^k$ for the polynomial defining $\dot{r}$ are not strictly necessary for showing existence of limit cycles, but leads to a simpler form for $Fz$.
We illustrate the construction in \cref{fig:polargame-construction}.

\begin{figure}
\centering
$\vcenterimage{width=0.25\textwidth}{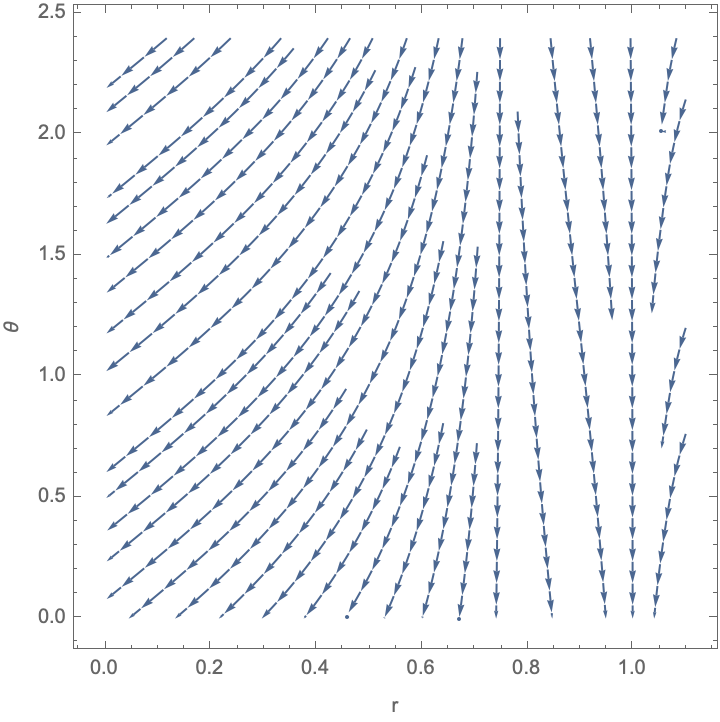}\quad\vcenterarrow\vcenterimage{width=0.25\textwidth}{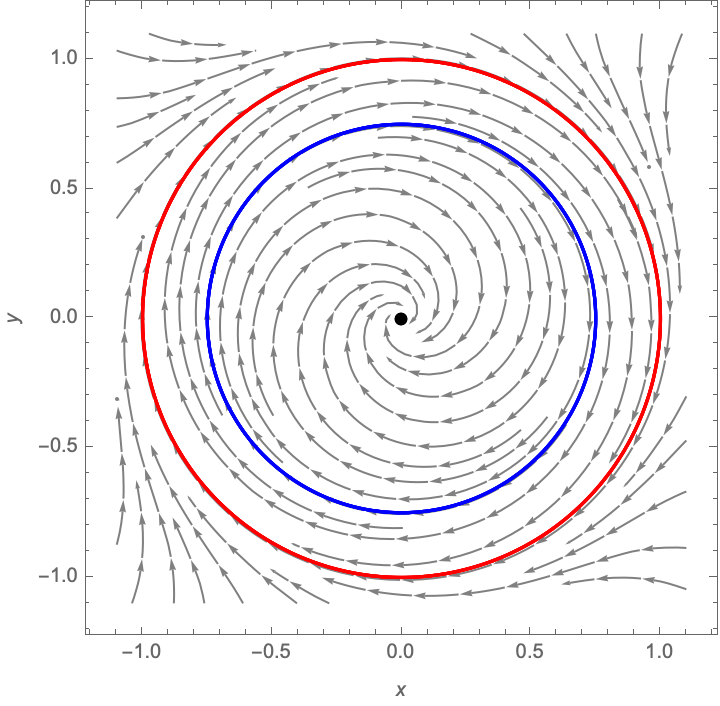}$
\caption{
  We can construct the desired properties in polar coordinates $(r,\theta)$ and subsequently transform it into a vectorfield in cartesian coordinates $(x,y)$.
  This is illustrated by a PolarGame with attracting limit cycles at radius $\|z\|=1$ and repellant limit cycle at $\|z\|=\nicefrac{3}{4}$ for the associated operator $Fz$ as indicated in red and blue respectively. %
}
\label{fig:polargame-construction}
\end{figure}

\begin{proposition}
\label{prop:polargame-limitcycles}
Let $Fz=(\dot{x},\dot{y})$ be the evolution in cartesian coordinates of the associated vectorfield in polar coordinates defined by \eqref{eq:polar-flow}.
Then the only stationary point of $F$ is at the origin $(0,0)$ and there exists a limit cycle at $r=c_i$ for all $i \in [k]$.
\end{proposition}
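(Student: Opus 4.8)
The plan is to exploit that in polar coordinates the dynamics fully decouple: $\dot r$ is a function of $r$ alone and $\dot\theta$ is a function of $r$ alone, so the radial behaviour is governed by a one-dimensional autonomous ODE. First I would record the polar-to-Cartesian relation: writing $x = r\cos\theta$ and $y = r\sin\theta$ and differentiating gives $(\dot x,\dot y) = R(\theta)\,(\dot r,\, r\dot\theta)$, where $R(\theta)$ is the rotation matrix. Since $R(\theta)$ is orthogonal, this yields $\|Fz\|^2 = \dot r^2 + r^2\dot\theta^2$ and, more importantly, $Fz = 0$ if and only if $\dot r = 0$ and $r\dot\theta = 0$.

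For the stationary-point claim I would substitute $\dot\theta = -br$, so that $r\dot\theta = -br^2$; as $b \neq 0$ this vanishes only at $r = 0$, and at $r = 0$ the radial equation also gives $\dot r = 0$. Hence the origin is the unique zero of $F$.

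For the periodic orbits I would note that each circle $\set{r = c_i}$ is invariant, because the radial polynomial $\dot r = -a\,r\prod_{j=1}^k (r+c_j)(r-c_j)$ vanishes identically there; meanwhile $\dot\theta = -bc_i \neq 0$ drives the trajectory around the whole circle with period $2\pi/|bc_i|$, exhibiting $\set{r = c_i}$ as a closed orbit. To upgrade this to a genuine limit cycle I would analyse the scalar radial ODE directly: its equilibria are exactly $r = 0$ and $r = c_i$, and on each annulus bounded by consecutive roots the polynomial keeps a constant sign, so $r(t)$ is strictly monotone there. This makes each circle an isolated periodic orbit whose neighbouring trajectories spiral monotonically toward it or away from it, which is precisely the limit-cycle behaviour, and the decoupling removes any need for a two-dimensional Poincar\'e--Bendixson argument.

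The step I expect to be the main obstacle is this final upgrade from closed orbit to limit cycle: extracting the isolation and the attracting/repelling character cleanly from the sign pattern of the radial polynomial, and confirming that the angular motion actually completes full revolutions so the orbit is genuinely traversed. A secondary technical point is verifying that the Cartesian field $F$ is well defined and smooth across the origin despite the polar parametrisation; this is immediate here, since the explicit coordinate change produces a polynomial $F$.
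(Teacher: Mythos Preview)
Your proposal is correct and follows essentially the same route as the paper: use the polar decoupling to see that $Fz=0$ forces $r\dot\theta=0$ and hence $r=0$, and that each circle $\{r=c_i\}$ is an invariant set on which $\dot\theta\neq 0$, hence a periodic orbit. Your additional step---reading off isolation from the sign pattern of the one-dimensional radial polynomial---is a welcome strengthening, since the paper's proof simply asserts ``limit cycle'' once it has a closed orbit without explicitly checking isolation.
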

\begin{proof}
Let $r=\sqrt{x^2+y^2}$.
It is easy to see from \eqref{eq:polar-flow} that the only stationary point is at $r=0$.
By construction, $\dot{r}$ is a polynomial with roots $c_i$ for all $i \in [k]$, so any trajectory starting on the circle defined by $r=c_i$ remains in that set.
However, $\dot{\theta}$ is strictly nonzero.
As a consequence $Fz$ is nonzero, so $r=c_i$ must define a limit cycle, which proofs the claim.
\end{proof}

\begin{figure}[tb]
\centering
\begin{subfigure}[t]{0.31\textwidth}
\caption{$a=1$}
\includegraphics[width=\textwidth]{TeX/Figs/Det_PolarGame_hard.png}
\end{subfigure}%
\begin{subfigure}[t]{0.32\textwidth}
\caption{$a=\nicefrac{3}{4}$}
\includegraphics[width=\textwidth]{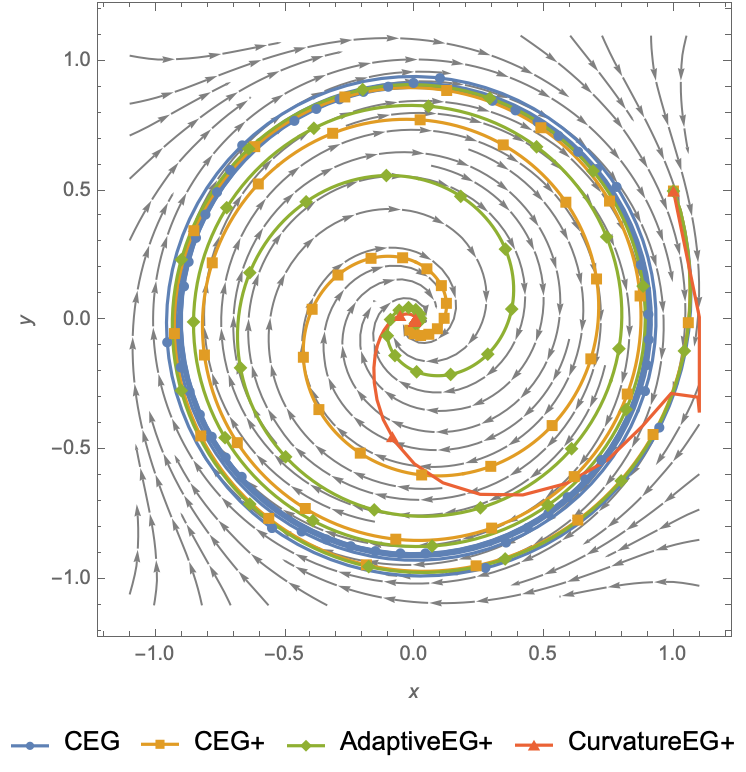}
\end{subfigure}%
\begin{subfigure}[t]{0.31\textwidth}
\caption{$a=\nicefrac{1}{3}$}
\includegraphics[width=\textwidth]{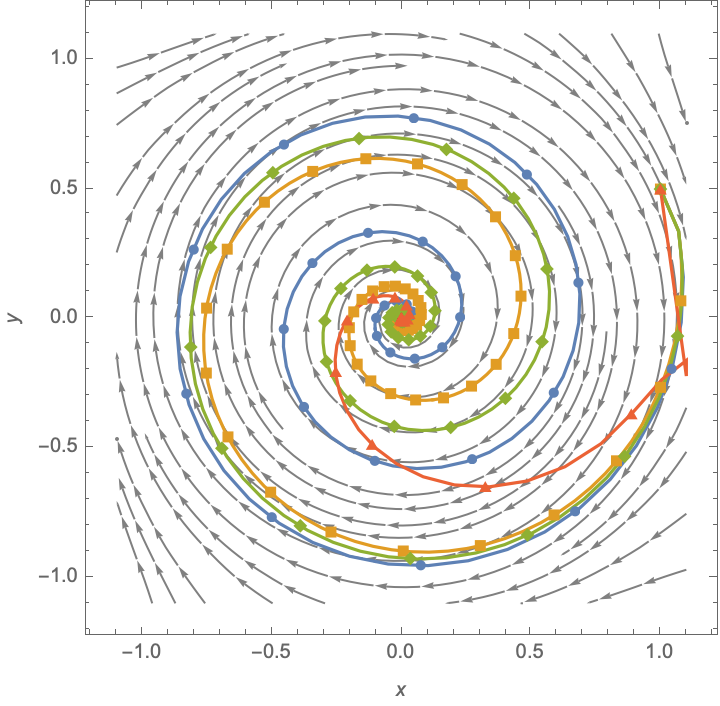}
\end{subfigure}
\caption{
\Cref{ex:polargame} for different values of $a$ (and thereby different values of $\rho$).
Note that even extragradient may escape the limit cycles even though $\rho < 0$. 
This is not in conflict with the negative results of \citet{hsieh2021limits} since the stepsize is not diminishing.
However, in the general case even extragradient with fixed stepsize will not converge as shown by the lower bound in \cref{thm:lowerbound}.
}
\label{fig:polargame}
\end{figure}

\begin{figure}[tb]
\centering
\begin{subfigure}[t]{0.39\textwidth}
\caption{\label{fig:det-extra-a}\Cref{ex:globalforsaken}}
\includegraphics[width=\textwidth]{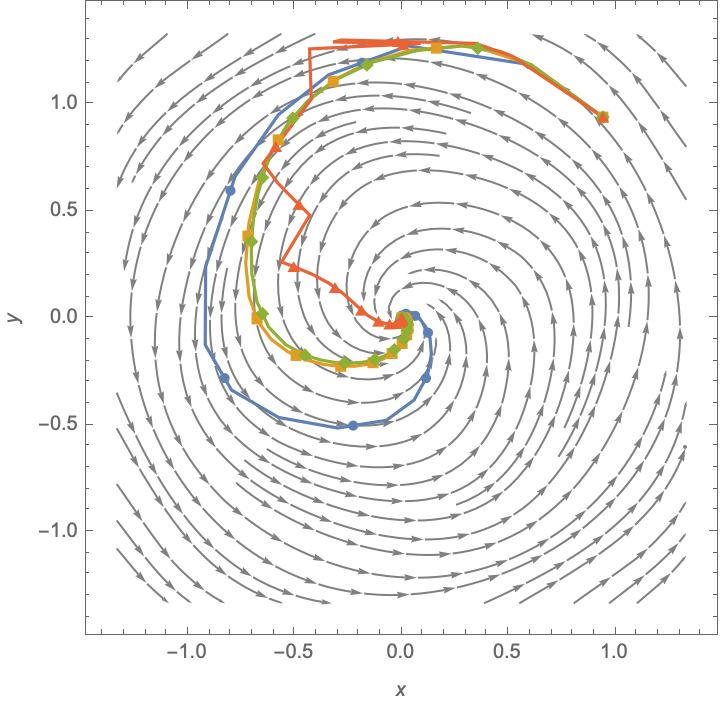}
\end{subfigure}%
\begin{subfigure}[t]{0.38\textwidth}
\caption{\label{fig:det-extra-b}\Cref{ex:lowerbound} ($\rho=\nicefrac{1}{3L}$)}
\includegraphics[width=\textwidth]{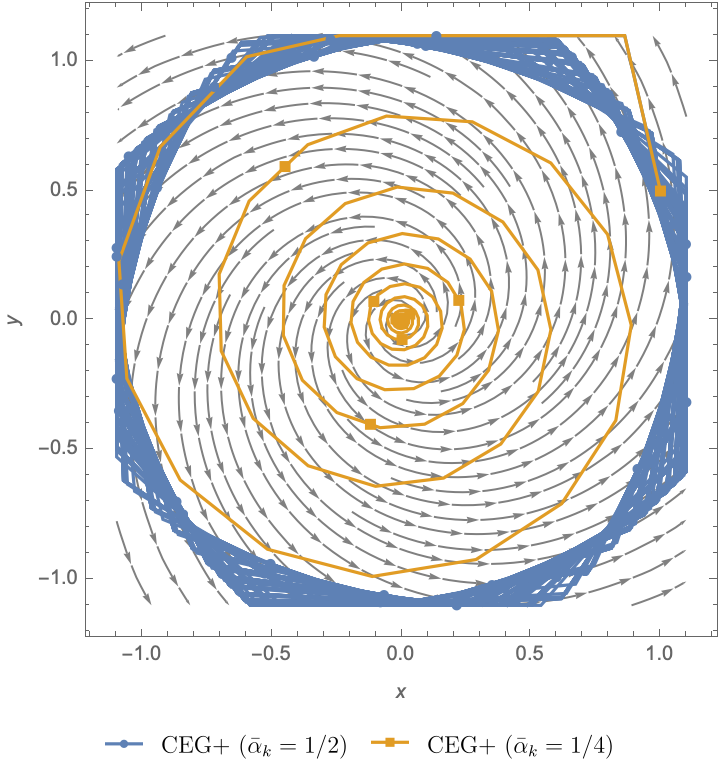}
\end{subfigure}
\caption{In (\subref{fig:det-extra-a}) we observe that all algorithms converge, despite $F$ having an attracting limit cycle in \Cref{ex:globalforsaken}. However, note that in the stochastic setting, where diminishing stepsize is required, SEG does not converge to the critical point (see \cref{fig:stoc-a}). 
In (\subref{fig:det-extra-b}) we demonstrate that when $\rho = -\nicefrac{1}{3L}$, picking $\bar{\alpha}_k < \nicefrac{1}{3}$ for \eqref{eq:iter:constant} is necessary for convergence in general.
See \Cref{sec:experiments} for the experimental setup.
}
\label{fig:globalforsaken-cegplus}
\end{figure}

\subsection{Proof for properties of \texorpdfstring{\Cref{ex:polargame}}{\S\ref*{ex:polargame}}}\label{app:polargame-props}

The operator $F: \mathbb{R}^2 \rightarrow \mathbb{R}^2$ defined in \cref{ex:polargame} is obtained by constructing the associated dynamics in polar coordinates,
\begin{equation}
\begin{split}
\frac{\partial r}{\partial t} &= -a \cdot r(t) \cdot (r(t) + 1) \cdot (r(t) - 1) \cdot (r(t) + \nicefrac{3}{4}) \cdot (r(t) - \nicefrac{3}{4}) \\
\frac{\partial \theta}{\partial t} &= - r(t).
\end{split}
\end{equation}
This can easily be verified by a change of variables.
From \Cref{prop:polargame-limitcycles} it then follows, that there must exist a limit cycle at $\|z\|=1$ and $\|z\|=\nicefrac{3}{4}$.
To verify the conditions on $\rho$ we compute the closed form solution to $\rho$ and $L$ in Mathematica:
\begin{enumerate}
\item 
For $a=1$
we have
$\rho = -\frac{50176}{1050977}$
and
$L=\frac{\sqrt{2538096 \sqrt{704424929}+70246989617}}{20000}$
\item 
For $a=\nicefrac{3}{4}$
we have
$\rho = -\frac{602112}{16798825}$
and
$L=\frac{\sqrt{7614288 \sqrt{6383574361}+635022906553}}{80000}$
\item 
For $a=\nicefrac 13$
we have
$\rho = -\frac{150528}{9439585}$
and
$L=\frac{\sqrt{2538096 \sqrt{754424929}+73446989617}}{60000}$
\end{enumerate}

It can easily be verified that the stated conditions for $\rho$ in \Cref{ex:polargame} are met for the values above.
This completes the proof.

We provide Mathematica code verifying the construction of $F$ and the closed form solutions to $L$ and $\rho$.

\subsection{Proof for properties of \texorpdfstring{\Cref{ex:globalforsaken}}{\S\ref*{ex:globalforsaken}}}
\label{app:GlobalForsaken}

\rebuttal{%
Under the definitions of $\rho$ and $L$ in \Cref{app:toy-examples}, we claim that the origin $(0,0)$ in \eqref{eq:globalforsaken} is a global Nash equilibrium
and satisfies \cref{ass:Minty:Struct} with $\rho > -\nicefrac{1}{2L}$.}

To verify that $(0,0)$ is indeed a global Nash equilibrium we need to check that the solution cannot be unilaterally improved. 
In other words, the solution should coincide with $(x^\star,y^\star)$ where
\begin{equation}
\begin{split}
x^\star &= \argmin_x \phi(x, 0)\\
y^\star &= \argmax_y \phi(0, y).
\end{split}
\end{equation}
We can easily verify this with \texttt{Minimize} in Mathematica, since the functions are polynomial for which a closed form solutions to the global optimization problem will be returned.

To find $\rho$ for $z^\star=(0,0)$ we solve the global minimization problem,
\begin{equation}
\minimize_{z} \frac{\langle Fz,z - z^{\star}\rangle}{\|Fz\|^2},
\end{equation}
for which a closed form solution can be found with Mathematica, which when numerically evaluated is approximately $-0.119732$. 

We need to compute $L$ to ensure $\rho > -\nicefrac{1}{2L}$.
In our case of convex constraints, $\mathcal C$, we have that $L= \sup_{z \in \mathcal C}\|JF(z)\|$ where $\|\cdot\|$ denotes the spectral norm \cite[Thm. 9.2 and 9.7]{rockafellar2009variational}. 
Under our constraint $\|z\|_\infty \leq \nicefrac 43$, this can similarly be computed in closed form, yielding $L = \nicefrac{\sqrt{\frac{1}{2} \left(9409 \sqrt{59721901}+74125591\right)}}{2835}$.
So $-\frac{1}{2L}\approx-0.165432$
which satisfy the condition $\rho > -\frac{1}{2L}$.
This completes the proof.

\begin{proposition}
Let $F$ be the associated operator of $\phi$ in \eqref{eq:globalforsaken} defined as $Fz=(\nabla_x\phi(x,y),-\nabla_y\phi(x,y))$. 
Define the radius as $r=\|z\|$.
Then, $Fz$ has a stable critical point at the origin $(0,0)$, at least one attracting limit cycle in the region defined by $\sqrt{\nicefrac{3}{2
}}<r<2$ and at least one repellant limit cycle within $r \leq \sqrt{\nicefrac{3}{2}}$.
\end{proposition}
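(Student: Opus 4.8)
The plan is to view $F$ as a planar vector field, study the associated gradient descent--ascent flow $\dot z=-Fz$, and establish the three assertions by linearizing at the origin and applying the Poincar\'e--Bendixson theorem on two explicit annuli. Writing $z=(x,y)$ and $\psi'(z)=\tfrac{2z}{21}(6z^4-14z^2+7)$, the operator reads $Fz=(y+\psi'(x),\,-x+\psi'(y))$. Its Jacobian at the origin is $JF(0)=\bigl(\begin{smallmatrix}\tfrac23 & 1\\ -1 & \tfrac23\end{smallmatrix}\bigr)$, with eigenvalues $\tfrac23\pm i$; hence $-JF(0)$ has eigenvalues $-\tfrac23\pm i$ with strictly negative real part, so the origin is an asymptotically stable focus of $\dot z=-Fz$, which is the claimed stable critical point. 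For the global radial behaviour I would compute
\[
\tfrac{d}{dt}\tfrac12\|z\|^2=-\langle z,Fz\rangle=-\bigl(x\psi'(x)+y\psi'(y)\bigr)=-\bigl(G(x^2)+G(y^2)\bigr),\qquad G(u)=\tfrac{4u^3}{7}-\tfrac{4u^2}{3}+\tfrac{2u}{3},
\]
so that the sign of $\dot r$ on a circle $x^2+y^2=R$ is governed entirely by $G(x^2)+G(y^2)$.

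The heart of the argument is a sign analysis of $G(u_1)+G(u_2)$ under $u_1+u_2=R$, $u_1,u_2\ge0$. Parametrizing $u_{1,2}=\tfrac R2\pm s$, a short computation gives $\tfrac{d}{ds}\bigl[G(\tfrac R2+s)+G(\tfrac R2-s)\bigr]=\bigl(\tfrac{24R}{7}-\tfrac{16}{3}\bigr)s$, which is monotone in $s\ge0$ and reverses its direction of monotonicity precisely at $R=\tfrac{14}{9}$. Consequently the extreme values of $G(u_1)+G(u_2)$ on each circle are attained either at $s=0$ (equal coordinates) or at the boundary $s=\tfrac R2$ (one coordinate zero), and the whole analysis reduces to the exact evaluations $G(\tfrac14)>0$, $G(\tfrac12)=\tfrac1{14}>0$, $G(\tfrac34)=-\tfrac1{112}<0$, and $G(2)=\tfrac47>0$. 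These yield: on $r=1/\sqrt2$ ($R=\tfrac12$) one has $G(x^2)+G(y^2)\ge G(\tfrac12)>0$, hence $\dot r<0$; on $r=\sqrt{3/2}$ ($R=\tfrac32$) one has $G(x^2)+G(y^2)\le 2G(\tfrac34)<0$, hence $\dot r>0$; and on $r=2$ ($R=4$) one has $G(x^2)+G(y^2)\ge 2G(2)>0$, hence $\dot r<0$.

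With these three definite signs the two limit cycles follow from Poincar\'e--Bendixson. On the annulus $\{\sqrt{3/2}\le\|z\|\le2\}$ the flow points strictly inward on both boundary circles, so it is forward invariant; containing no equilibrium, it must contain an attracting periodic orbit, which is the attracting limit cycle. On the annulus $\{1/\sqrt2\le\|z\|\le\sqrt{3/2}\}$ the forward flow instead leaves through both boundaries, so this set is forward invariant for the time-reversed system $\dot z=Fz$; applying Poincar\'e--Bendixson to the reversed flow produces a periodic orbit there, which is repelling for the original dynamics and lies within $\|z\|\le\sqrt{3/2}$, as required.

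The main obstacle is verifying the no-equilibrium hypothesis of Poincar\'e--Bendixson, namely that $F$ has no zero in the closed region $1/\sqrt2\le\|z\|\le2$. Using that $\psi'$ is odd, the system $Fz=0$ reduces to the single polynomial equation $x+\psi'(\psi'(x))=0$ (with $y=-\psi'(x)$), whose real roots can be enumerated exactly; one checks that none produces a point with $\|z\|\in[1/\sqrt2,2]$, a certification of the same computer-assisted flavour as the other verifications in this paper. A secondary delicate point is the sign at $r=\sqrt{3/2}$, where $\dot r$ is small in magnitude: it is precisely the exact value $G(\tfrac34)=-\tfrac1{112}$, combined with the monotonicity-in-$s$ reduction, that makes this strict inequality rigorous rather than merely numerical.
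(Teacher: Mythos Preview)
Your argument is correct and follows the same overall strategy as the paper---establish definite signs of $\dot r$ on well-chosen circles and invoke Poincar\'e--Bendixson---but the execution differs in two places worth noting. First, the paper computes $\dot r$ directly in polar coordinates, obtaining the closed form
\[
\dot r=-\tfrac{1}{42}\,r\bigl(9r^4\cos(4\theta)-14r^2\cos(4\theta)+15r^4-42r^2+28\bigr),
\]
and reads off the signs at $r=\sqrt{3/2}$ and $r=2$ from the range of $\cos(4\theta)$; your decomposition $\langle z,Fz\rangle=G(x^2)+G(y^2)$ together with the $s$-parametrization is an equivalent but coordinate-free route to the same inequalities. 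Second, for the repellant cycle the paper appeals to the Poincar\'e--Hopf index theorem, combining the stability of the origin with the trapping of the outer annulus; you instead build a concrete inner annulus $1/\sqrt{2}\le r\le\sqrt{3/2}$ and apply Poincar\'e--Bendixson to the time-reversed flow. Your route is more elementary (no index theorem needed) and, as a byproduct, localizes the repellant orbit to $r\ge 1/\sqrt{2}$, which is sharper than the stated conclusion. You are also more explicit than the paper about the no-equilibrium hypothesis in the annuli; the paper simply asserts it, so your proposed polynomial check is an improvement in rigor rather than a new burden.
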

\begin{proof}
We follow a similar argument as in \citet[D.2]{hsieh2021limits}.
We can compute the associated operator $F$,
\begin{equation}
\left(
\begin{array}{c}
\dot{x}\\\dot{y}
\end{array}
\right)
=
\left(
\begin{array}{c}
 \frac{4 x^5}{7}-\frac{4 x^3}{3}+\frac{2 x}{3}+y \\
 -x+\frac{4 y^5}{7}-\frac{4 y^3}{3}+\frac{2 y}{3} \\
\end{array}
\right).
\end{equation}
With a change of variables into polar coordinates $(r,\theta)$ we get that $r=\sqrt{x^2+y^2}$ evolves as, %
\begin{equation}
\dot{r} = -\frac{1}{42} r \left(9 r^4 \cos (4 \theta )-14 r^2 \cos (4 \theta )+15 r^4-42
   r^2+28\right).
\end{equation}
When $r = \sqrt{\nicefrac{3}{2}}$ this reduces to $\dot{r}=\frac{3 \cos (4 \theta )+5}{56 \sqrt{6}}$ and we observe that $\dot{r}>0$ for any $\theta$.
Likewise for $r=2$, we have that $\dot{r} = -\frac{4}{21} (22 \cos (4 \theta )+25)$ which implies $\dot{r} < 0$.
Since there is no stationary point in the region $\mathcal S = \set{(r,\theta) : \sqrt{\nicefrac{3}{2}}<r<2}$ it then follows from the Poincar\'e-Bendixson theorem \citep[Thm. 7.16]{teschl2012ordinary} that there must exist at least one attracting limit cycle in $\mathcal S$.
Further, it is easy to see that $(0,0)$ is a critical point and that it is stable by inspection of the Jacobian $JF(z)$.
Since $\mathcal S$ is trapping, it follows from Poincar\'e–Hopf index theorem, that there must exist a repellant limit cycles in the region defined by $r<\sqrt{\nicefrac{3}{2}}$.
This completes the proof.
\end{proof}

\subsection{Proof of properties for \texorpdfstring{\cite[Example 5.2]{hsieh2021limits}}{Forsaken}}
\label{app:Forsaken}

\rebuttal{%
This section considers \cite[Example 5.2]{hsieh2021limits} on the constraint domain $\mathcal D = \{z\in\R^n\mid \|z\|_\infty \leq \nicefrac{3}{2}\}$. 
We show that the unique critical point $z^\star$ does not satisfies the weak MVI for $\rho > -\nicefrac{1}{2L}$ even when restricted to the constraint set $z \in \mathcal D$. 
We restate the example with the additional constraint for convenience.
\begin{example}[{\cite[Example 5.2]{hsieh2021limits}}]
\begin{equation}
\label{eq:forsaken}
\tag{Forsaken}
\minimize_{|x|\leq\nicefrac{3}{2}} \maximize_{|y|\leq\nicefrac{3}{2}} \phi(x,y):=x(y-0.45)+\psi(x)-\psi(y),
\end{equation}
where $\psi(z) = \frac{1}{4} z^{2}-\frac{1}{2} z^{4}+\frac{1}{6} z^{6}$. 
\end{example}

By using Mathematica, we can obtain a closed form solution of the Lipschitz constant $L$ of $F$ restricted to the constraint set, which we find to be $L=\frac{1}{80} \sqrt{\frac{1}{2} \left(1089 \sqrt{801761}+993841\right)}$.
Mathematica can solve approximately for the critical point, yielding $z^\star=(0.0780267, 0.411934)$.
To find $\rho$ we want to globally minimize $\rho(z):=\frac{\left\langle F z, z-z^{\star}\right\rangle}{\|F z\|^2}$ for $z \in \mathcal D$.
Mathematica finds the candidate $z'=(-1.01236, -0.104749)$ for which $\rho(z') = -0.477761$.
So $\rho$ must be at least this small, i.e. $\rho < -0.477761$.
Since $-\nicefrac{1}{2L} \approx -0.04$, this implies that $\rho < -\nicefrac{1}{2L}$.
See \texttt{Forsaken.nb} for Mathematica-assisted computations.

This rules out convergence guarantees for both \eqref{eq:iter:constant} and AdaptiveEG+ (\Cref{alg:WeakMinty:Struct}), which is supported by the simulation in \Cref{fig:forsaken-all}.
However, as observed, \eqref{eq:CurvatureEG} converges in the simulations.

}

\begin{figure}
\centering
\includegraphics[width=0.4\textwidth]{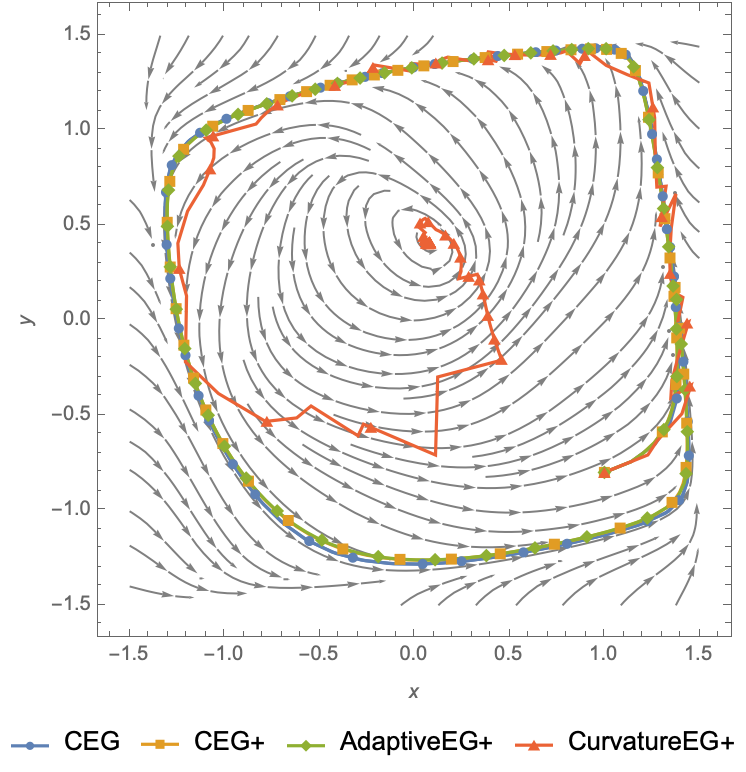}
\caption{Demonstration of algorithms on \cite[Example 5.2]{hsieh2021limits}. Only \eqref{eq:CurvatureEG} converges to the critical point, while
the remaining methods, CEG, \eqref{eq:iter:constant} with $\bar{\alpha}_k = \nicefrac{1}{2}$, and AdaptiveEG+ converges to an attracting limit cycle.
See \Cref{sec:experiments} for further specification of the algorithms.}
\label{fig:forsaken-all}
\end{figure}

\end{document}